\documentclass[12pt]{article}

\usepackage[english]{babel}
\usepackage[utf8]{inputenc}
\usepackage[T1]{fontenc}
\usepackage{lmodern}
\usepackage{amssymb,amsmath,amsfonts}
\usepackage{xspace}
\usepackage{graphicx}
\usepackage{hyperref}
\usepackage{url}
\usepackage{stmaryrd}
\usepackage[babel=true]{csquotes}
\usepackage{amsthm}
\usepackage{subfigure}
\usepackage{mathrsfs}
\usepackage{float}

\usepackage{tikz}

\usepackage{geometry}\geometry{margin=1.3in}

\theoremstyle{plain}
\newtheorem{thm}{Theorem}[section]
\newtheorem{lm}[thm]{Lemma}
\newtheorem{prop}[thm]{Proposition}

\theoremstyle{definition}
\newtheorem{dfn}[thm]{Definition}
\newtheorem{rmk}[thm]{Remark}

\def\N{\mathbb{N}\xspace}
\def\Z{\mathbb{Z}\xspace}

\def\R{\mathbb{R}\xspace}
\def\C{\mathbb{C}\xspace}

\hypersetup{pdfborder={0000}, colorlinks=true, linkcolor=blue, citecolor=red}

%%%%%%%%%%%%%%  Ajout Laurent

%%%% Lettres grecques

\newcommand{\om}{\omega} 
\newcommand{\al}{\alpha} 
\newcommand{\be}{\beta}
\newcommand{\ga}{\gamma}

\newcommand{\si}{\sigma}
 
\newcommand{\Om}{\Omega}
\newcommand{\ep}{\epsilon}

\newcommand{\Ci}{\mathcal{C}^{\infty}} 
\newcommand{\con}{\overline}
\newcommand{\bigo}{\mathcal{O}}

\newcommand{\wt}{\widetilde}
\newcommand{\ti}{\tilde}

\newcommand{\op}{\operatorname}

\newcommand{\id}{\op{id}}

\newcommand{\Hilb}{\mathcal{H}}

\begin{document}

\title{Analytic Berezin-Toeplitz operators}

\author{Laurent Charles}

\maketitle

\abstract{We introduce new tools for analytic microlocal analysis on
  K\"ahler manifolds. As an application, we prove that the space of Berezin-Toeplitz operators with
analytic contravariant symbol is an algebra. We also give a short proof of the
Bergman kernel asymptotics up to an exponentially small error. }

\section{Introduction}

Asymptotic behavior of Bergman kernels on K\"ahler manifolds has been studied in
many papers after the pioneer works of Bouche \cite{Bouche}, Tian \cite{Tian}, Zeldtich \cite{Ze} and Catlin \cite{Ca}. New asymptotics with exponentially small error terms
have been obtained recently by Rouby-Sj\"ostrand-Vu Ngoc \cite{RoSjVu}. In
the same vein, Deleporte \cite{Del} started to develop analytic microlocal techniques
for Berezin-Toeplitz operators on K\"ahler manifolds.

Our goal in this paper is to further develop this program in two ways. First, we introduce new methods to simplify the
complicated estimates of \cite{Del} and obtain a short proof of the
 Bergman kernel estimates of \cite{RoSjVu}. Second, we define the algebra of 
 Berezin-Toeplitz operators whose contravariant symbols are analytic symbols
 in the sense of \cite{SjAs}. 

Before we present our main results and methods, let us start with two typical
expansions in this theory: the Bergman kernel on the diagonal and the
composition of two Toeplitz operators. In both cases, we will compare the
(smooth) usual version of the result with its (analytic) improvement.

Let $M$ be a compact complex manifold equipped with two holomorphic Hermitian
line bundles $L$, $L'$. Assume $L$ is positive and for any $k \in \N$, let
$\Hilb_k$ be the space of holomorphic section of $L^k \otimes L'$.
The Bergman kernel of $L^k \otimes L'$ is defined by $\Pi_k(x, \con y) =
\sum_{i=1}^{d_k} \Psi_{i} (x) \otimes \con \Psi_i (y)$, for any $x,y \in M$,   where $(\Psi_i)_{i=1, \ldots , d_k}$ is any orthonormal basis of $\Hilb_k$.
It was proved in \cite{Ca}, \cite{Ze} from \cite{BoSj} that the Bergman kernel
has the following asymptotic expansion on the diagonal
\begin{gather} \label{eq:asexp_diagonal_bergman}
  \Pi_{k} (x, \con x) = \biggl( \frac{k}{2 \pi} \biggl)^n \sum_{\ell=0}^N
  k^{-\ell} \rho_{\ell} (x) + \bigo ( k^{-N-1}), \qquad \forall N
\end{gather}
with smooth coefficients $\rho_{\ell} \in \Ci(M)$. A lot can be said on
these coefficients but our aim here is to improve the remainder. Let us assume from now on that the
metrics of $L$ and $L'$ are analytic. Then if we replace the finite sum  by a
partial sum over all integers $\ell$ smaller than $\ep k$ with $\ep$
sufficiently small, the remainder
becomes exponentially small. The precise result, proved in \cite{RoSjVu}, is that there exist $\ep>0$ and $C>0$ such that 
\begin{gather} \label{eq:asexp_diagonal_bergman_an}
  \Pi_{k} (x, \con x) = \biggl(\frac{k}{2 \pi} \biggr)^n  \; \sum_{\ell=0}^{\lfloor
    \ep k \rfloor}  k^{-\ell} \rho_{\ell} (x) + \bigo ( e^{-k/C})
\end{gather}
with a $\bigo$ uniform on $M$. The case of surfaces with constant curvature
was done before by Berman \cite{Be12}.

Let us consider now Toeplitz operators. For any  $f \in \Ci (M)$, let  $T_k(f)
: \Hilb_k \rightarrow \Hilb_k$ be the operator sending $\Psi$ into $\Pi_k ( f
\Psi)$ with $\Pi_k$  the orthogonal projection of $\Ci ( M , L^k \otimes L')$
onto $\Hilb_k$. By Boutet de Monvel-Guillemin \cite{BoGu}, \cite{Gu3},  for any $f,g \in \Ci (M)$, 
\begin{gather} \label{eq:toep_prod_inf}
  T_k (f) T_k (g) = \sum_{\ell =0}^{N} k^{-\ell}  T_k (h_{\ell}) + \bigo ( k^{-N-1}), \qquad \forall N
\end{gather}
for some coefficients $h_{\ell} \in \Ci (M)$. In the case $f$ and $g$ are analytic, we will prove that there exist $\ep>0$ and $C>0$ such that 
\begin{gather} \label{eq:toep_prod_exp}
  T_k (f) T_k (g) = \sum_{\ell =0}^{\lfloor \ep k \rfloor} k^{-\ell}  T_k (h_{\ell})  + \bigo ( e^{-k/C})
\end{gather}
where the $\bigo$ is in uniform norm. 

These two results, Bergman kernel expansion and Toeplitz composition, belong
actually to the same theory. Let us explain this first in the usual smooth setting
and then in the analytic case. 
Define a Berezin-Toeplitz operator as any family $(S_k) \in \prod_{k\geqslant
  1} \op{End} (\Hilb_k) $ such that
\begin{gather} \label{eq:def_BT}
  S_k = \sum_{\ell =0}^{N} k^{-\ell}  T_k (f_{\ell}) + \bigo ( k^{-N-1}), \qquad \forall N
\end{gather}
for a sequence $(f_{\ell})$ of $\Ci (M)$. By \cite{Gu3}, \cite{BoGu}, the space $\mathcal{T}$ of
Berezin-Toeplitz operator is a subalgebra of $\prod_k \op{End} \Hilb_k$. In \cite{oim}, we proved the following
characterization of the Schwartz kernel of Berezin-Toeplitz operators. Here,
the Schwartz kernel of $S_k \in \op{End} ( \Hilb_k)$ is the holomorphic section of $(L^k
\otimes L') \boxtimes (\con L^k \otimes \con L ')$ given by
$$ S_k (x, \con y ) = \sum_{i =1} ^{d_k} (S_k \Psi_i) (x) \otimes \con{\Psi}_i
  (y), \qquad x,y \in M $$ 
  where $(\Psi_i)_{i=1, \ldots , d_k}$ is any orthonormal basis of $\Hilb_k$.
  Then $(S_k) \in \prod
\op{End}( \Hilb_k)$ is a Berezin-Toeplitz operator if and only if for any
compact set $K$ not intersecting the diagonal $|S_k (x,y_c) | = \bigo (
k^{-N})$ on $K$ for any $N$ and on a neighborhood $W $ of the diagonal  
\begin{gather} \label{eq:kernel_toep_intro}
S_k (x,y_c) = \biggl( \frac{k}{2 \pi } \biggr)^n E^k (x,y_c) E' (x,y_c)
\sum_{\ell= 0 }^N k^{-\ell} \tilde{\tau}_\ell (x,y_c) + \bigo ( k^{n-N-1}) 
\end{gather}
for any $N$, 
where $E$, $E'$ are holomorphic sections of $L\boxtimes \con L $ and $L'
\boxtimes \con L'$ on $W$ such that their restrictions to the diagonal are the
canonical sections determined by the metrics, and the $\ti \tau_{\ell}$'s are smooth
functions on $W$ such that $\con \partial \ti \tau_{\ell}$ vanishes to infinite
order along the diagonal.

In particular, since the Bergman kernel is the Schwartz kernel of the
Berezin-Toeplitz operator $(\op{id}_{\Hilb_k})$,  $\Pi_k(x,y_c)$ satisfies the
expansion (\ref{eq:kernel_toep_intro}), which extends the expansion 
\eqref{eq:asexp_diagonal_bergman} outside the diagonal. 

For the analytic version, we need the notion of analytic symbol \cite{SjAs}. We say that a
formal series $\sum \hbar^\ell a_{\ell}$ of $\Ci ( M ) [[\hbar]]$ is an {\em analytic
symbol} if there exist a neighborhood $W \subset M \times \con M$ of the diagonal
and $C>0$ such that each $a_{\ell}$ has a holomorphic extension $\ti a_{\ell}$ to
$W$ satisfying $|\ti a_{\ell} |\leqslant C^{\ell +1} \ell !$. When $\ep
<1/C$, we set
\begin{gather} \label{eq:partial_sum_an_symbol}
a ( \ep, k ) = \sum_{\ell =0 }^{ \lfloor \ep k \rfloor} a_\ell
k^{-\ell}, \qquad \ti a ( \ep, k ) = \sum_{\ell =0 }^{ \lfloor \ep k \rfloor} \ti a_\ell
k^{-\ell}.
\end{gather}
We have already used these partial sums in \eqref{eq:asexp_diagonal_bergman} and \eqref{eq:toep_prod_inf}. As a result, 
 the corresponding series $\sum \hbar^{\ell}
\rho_{\ell}$ and $\sum \hbar^{\ell} h_{\ell}$  are analytic
symbols.

An {\em analytic} Berezin-Toeplitz operator is a family $(S_k ) \in \prod_k
\op{End} \Hilb_k $ such that
\begin{gather}  \label{eq:def_an_ber_toep}
S_k = T_k ( f(\ep, k ) ) + \bigo ( e^{-k/C})
\end{gather}
for an analytic symbol $\sum \hbar^\ell f_\ell$ and $C>0$. 

\begin{thm} $ $ \label{th:intro}
  \begin{enumerate} 
    \item The space of   analytic Berezin-Toeplitz
      operator is a subalgebra of $\mathcal{T}$.
      \item A family $ (S_k) \in \prod_k \op{End} ( \Hilb_k)$ is an analytic
        Berezin-Toeplitz operator if and only if for any compact set $K$ of $M
        \times \con M$ not intersecting the diagonal $|S_k ( x,y_c) | = \bigo
        ( e^{-k/C_K})$ on $K$ for some $C_K >0$ and on a neighborhood of the
        diagonal
\begin{gather*}
        S_k (x,y_c) = \Biggl( \frac{k}{2 \pi } \Biggr)^n E^k (x,y_c) E' (x,y_c)
\tilde{\tau} (\ep, k )  (x,y_c) + \bigo ( e^{-k/C} ) 
\end{gather*}
where $E$, $E'$ are the same sections as in \eqref{eq:kernel_toep_intro},
$\ti \tau (\ep, k)$ is the partial sum associated to an analytic symbol $\sum
\hbar^{\ell} \tau_{\ell}$ as in \eqref{eq:partial_sum_an_symbol} and $C>0$.
\end{enumerate}
\end{thm}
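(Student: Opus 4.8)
The plan is to establish Theorem~\ref{th:intro} by reducing everything to a single analytic analogue of the off-diagonal Bergman kernel expansion and then running the standard Berezin-Toeplitz calculus while keeping track of the exponentially-small error budget. First I would set up the local model: near a point of the diagonal, trivialize $L$ and $L'$ by holomorphic frames with analytic potentials $\phi$, $\phi'$, so that Bergman kernels and Toeplitz kernels become honest holomorphic functions of $(x,\con y)$ weighted by $e^{k\phi}$. The key technical input, to be proved with the ``new tools'' advertised in the abstract, is an analytic Bergman kernel expansion: on a fixed neighborhood $W$ of the diagonal,
\begin{equation*}
  \Pi_k(x,\con y) = \Bigl(\tfrac{k}{2\pi}\Bigr)^n E^k(x,\con y) E'(x,\con y)\, \ti\tau_0(\ep,k)(x,\con y) + \bigo(e^{-k/C}),
\end{equation*}
where $\sum\hbar^\ell\tau_{0,\ell}$ is an analytic symbol, together with an exponential decay estimate $|\Pi_k(x,\con y)|=\bigo(e^{-k/C_K})$ on any compact $K$ disjoint from the diagonal. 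Since $\Pi_k$ is the Schwartz kernel of $(\id_{\Hilb_k})$, this is precisely the ``only if'' direction of part (2) for that particular operator, and it contains \eqref{eq:asexp_diagonal_bergman_an} as the restriction to $x=y$; I expect the paper to obtain it via a contour-integral / complexified-WKB construction of an approximate reproducing kernel, followed by an $L^2$ a priori estimate (a $\bar\partial$ or Schur-test argument) controlling the difference between the true and approximate projectors by the exponential tail.

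Second, granting this Bergman estimate, I would prove the ``only if'' direction of part (2) in general: given $S_k = T_k(f(\ep,k)) + \bigo(e^{-k/C})$ with $\sum\hbar^\ell f_\ell$ analytic, write the Schwartz kernel of $T_k(f(\ep,k))$ as $\int \Pi_k(x,\con z)\, f(\ep,k)(z)\, \Pi_k(z,\con y)\, \frac{k^n}{(2\pi)^n}\,dz$ against the Liouville measure, substitute the local weighted form of $\Pi_k$, and evaluate by complex stationary phase with the critical point at $z$ the ``holomorphic midpoint'' of $(x,\con y)$. Keeping analytic control on all remainders — i.e.\ Cauchy-estimate bounds of the form $C^{\ell+1}\ell!$ on the coefficients produced by the Laplace-method expansion, truncated at order $\lfloor\ep k\rfloor$ — is what turns the usual $\bigo(k^{n-N-1})$ remainder into $\bigo(e^{-k/C})$; this yields an analytic symbol $\sum\hbar^\ell\tau_\ell$ and the claimed kernel form. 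The off-diagonal exponential decay follows because the phase in that integral has strictly negative real part away from the diagonal.

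Third, for the converse direction of part (2), I would start from a family $(S_k)$ whose kernel has the stated form on $W$ and is exponentially small off the diagonal, and recover the contravariant symbol. On $W$ one matches $\ti\tau(\ep,k)$ against the analytic symbol calculus to read off $f_0 = \tau_0/\tau_{0,0}$ (the leading coefficient of the Bergman symbol), then subtract $T_k(f_0)$ and iterate: each subtraction kills one more order in $\hbar$, the remainder kernels stay in the same class, and the analyticity bounds on the successive symbols $f_\ell$ are forced by the $C^{\ell+1}\ell!$ bounds on the $\tau_\ell$'s together with the fact that inverting/dividing analytic symbols preserves the class (a Cauchy-product estimate). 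Summing $\sum\hbar^\ell f_\ell$ into $f(\ep,k)$ and invoking the ``only if'' direction to compare $T_k(f(\ep,k))$ with $S_k$ closes the equivalence. Part (1) then comes essentially for free: given two analytic Berezin-Toeplitz operators, their product has a Schwartz kernel which is the $\Pi_k$-convolution of two kernels of the above type; one more complex-stationary-phase evaluation shows the product kernel is again of that type with an analytic symbol (this is exactly \eqref{eq:toep_prod_exp} written kernel-wise), and the $\bigo(e^{-k/C})$ errors compose because $\|T_k(f(\ep,k))\|$ and the projector norms are $\bigo(k^n)$ at worst, which is absorbed. Containment in $\mathcal{T}$ is immediate since truncating an analytic symbol at $\lfloor\ep k\rfloor$ agrees with truncating at any fixed $N$ up to $\bigo(k^{-N-1})$, so \eqref{eq:def_an_ber_toep} implies \eqref{eq:def_BT}.

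The main obstacle, and the place where essentially all the real work sits, is the analytic Bergman kernel estimate together with the bookkeeping needed to keep \emph{every} auxiliary expansion in the analytic-symbol class with \emph{uniform} constants: the naive stationary-phase remainder bounds degrade like $\ell!$ per order with constants that could blow up, and one must show that the optimal truncation at $\ell\sim\ep k$ genuinely produces $e^{-k/C}$ rather than merely $(\ep k)^{-\ep k}$-type decay with an uncontrolled base. Handling this requires the quantitative complex WKB / majorant-series machinery — presumably the new tools of the paper — so that the contour deformations defining $E$, $E'$ and the $\tau_\ell$'s remain inside a fixed complex neighborhood $W$ independent of $k$ and $\ell$. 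Everything else (the two stationary-phase evaluations, the iteration recovering the symbol, the algebra property) is the classical Boutet de Monvel--Guillemin argument run with these analytic estimates in place.
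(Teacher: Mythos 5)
Your plan inverts the paper's logical order at the most important point, and the place where you defer to unspecified machinery is exactly where the paper's actual contribution lies. You propose to prove the analytic off-diagonal Bergman expansion \emph{first}, by a complexified WKB construction of an approximate reproducing kernel plus a $\bar\partial$/$L^2$ correction --- that is essentially the route of Rouby--Sj\"ostrand--Vu Ngoc and Berman--Berndtsson--Sj\"ostrand, which the paper deliberately avoids. The paper instead works at the level of symbols: it first proves a uniform bound $|\ti a_{\ell,\al,\be}|\leqslant C^{\ell+1}$ on the coefficients of the bidifferential operators $A_\ell$ of the product $\star$ (Theorem \ref{th:main_estimate}), deduces that $\sum\hbar^\ell\rho_\ell$ is analytic purely algebraically from its being the unit of $\star$ (writing $\rho_m$ as a sum of $2^{m-1}$ compositions $Q_{i_1}\cdots Q_{i_\ell}(1)$ and estimating operator norms), then proves closure under product by analytic stationary phase, and only \emph{then} obtains the analytic Bergman kernel statement as a soft corollary: any self-adjoint $(T_k)\in\mathcal{T}^\om$ with symbol $\sum\hbar^\ell\rho_\ell$ satisfies $T_k^2=T_k+\bigo(e^{-k/C})$, so its spectrum concentrates at $\{0,1\}$, and a trace count forces $T_k=\id_{\Hilb_k}+\bigo(e^{-k/C})$. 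So the Bergman estimate is an output of the algebra property, not an input to it.

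The genuine gap is in your treatment of the composition and inversion estimates. Both your iterative subtraction for the converse of part (2) and your claim that ``inverting/dividing analytic symbols preserves the class (a Cauchy-product estimate)'' require controlling compositions of differential operators whose order grows linearly in $\ell$ and whose coefficients already grow like $C^{\ell+1}\ell!$. A Cauchy product of numerical majorants does not do this: iterating Leibniz/Cauchy estimates na\"ively on a fixed domain loses an uncontrolled factorial at each step, which is precisely the failure mode you yourself flag (``constants that could blow up'') without resolving. The paper's resolution is the Sj\"ostrand family of seminorms $\|\cdot\|_{t,s}$ on nested balls, with the composition law $\|P\circ Q\|_{t,s}\leqslant (C(p+q))^{p+q}/(t-s)^{p+q}$ of Lemma \ref{lem:dertech} obtained by choosing the intermediate radius proportionally to $p/(p+q)$; this is what makes both the formula $\rho_m=\sum Q_{i_1}\cdots Q_{i_\ell}(1)$ and the Neumann-series inversion of the contravariant-to-covariant map $B=\sum\hbar^\ell B_\ell$ (Theorem \ref{th:multiplier}, Lemma \ref{lem:un_autre}) close with uniform constants. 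Without identifying a tool of this kind, your third step (recovering an analytic contravariant symbol from an analytic covariant one) and your analyticity claim for the Bergman symbol do not go through as written.
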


\begin{rmk} $ $
  \begin{enumerate}
  \item
    By the first assertion, $(\op{id}_{\Hilb_k})$ is an analytic
Berezin-Toeplitz operator and thus the second assertion describes the Bergman
kernel up to $\bigo ( e^{-k/C})$. This description was the main result of
\cite[Theorem 6.1]{RoSjVu}.
\item In \cite{Del}, the operators $(S_k) \in \prod_k \op{End} \Hilb_k$ whose
  Schwartz kernel satisfies the condition given in the second assertion are
  called covariant Toeplitz operators. The main results of \cite{Del} are first that these operators are closed under product and second that the
  operators with an elliptic symbol have an inverse in the same class of
  operators.
  \item Given the previous remarks, the original result in Theorem
    \ref{th:intro} is its second assertion:  the covariant Toeplitz operators
    in the sense of \cite{Del} are the analytic Berezin-Toeplitz operators
    defined by \eqref{eq:def_an_ber_toep}.
\end{enumerate}
\end{rmk}

As already mentioned, the main contribution of this work is in our proofs.
Recall first that the basic tools of analytic microlocal analysis, including analytic symbols and
stationary phase lemma, have been introduced a long time ago in \cite{BoKr},
\cite{SjAs} for the theory of analytic pseudo-differential operators. When we
try to apply these techniques to Berezin-Toeplitz operators, we face the
difficulty that the symbol product corresponding to operator composition is
only partially known, unlike the Moyal-Weyl product for the pseudodifferential
operators. A large literature exists on these products describing them
in terms of the K\"ahler metric, cf. \cite{MaMa} for instance, but the application to our problem is
not straightforward, as is attested by the attempt to prove that $\sum \hbar^{\ell}
\rho_{\ell}$ is analytic in \cite{HeLuXu}, or the complicated estimates of \cite{Del}.

So we have
to identify the characteristics of these products which allow the analytic
calculus. The convenient property we found is a particular growth of some
coefficients. 
To be more specific, define the {\em symbol} of a Berezin-Toeplitz operator $(S_k)$ as the series $\sum
\hbar^{\ell} \tau_{\ell}$ whose coefficients $\tau_{\ell} \in \Ci (M)$ are the restrictions
to the diagonal of the $\ti \tau_{\ell}$ in \eqref{eq:kernel_toep_intro}.
In \cite{oim}, these symbols were called non normalized covariant symbols,
because the Bezerin covariant symbol is obtained by normalising them.
Furthermore we proved  that  the
product of these symbols, corresponding to the operator composition, has the form $\sum \hbar^{\ell} f_{\ell} \star \sum
\hbar^m g_m = \sum \hbar^{\ell + m + p } A_p (f_{\ell}, g_{m} ) $ where
the $A_p$ are bidifferential operators with a local expression 
$$A_p ( f,g) = \sum_{|\al|,
  |\be| \leqslant p} a_{p, \al, \be} \frac{(\partial_{\bar{z}}^\be f)( \partial_{z}^\al
g) }{\al! \be !}. $$ The unit of $(\Ci (M)[[\hbar]], \star)$ is nothing else than $\sum
\hbar^{\ell} \rho_{\ell}$. 

The main estimate we will establish is that a holomorphic extension of the
$a_{p, \al, \be}$'s satisfies
\begin{gather} \label{eq:mest}
|\ti a_{p, \al , \be} | \leqslant C^{p+1} p! \end{gather} 
where $C$ does not depend on $p$, $\al$, $\be$; cf. Theorem \ref{th:main_estimate}. From this, we will deduce that
$\sum \hbar^\ell \rho_{\ell}$ is analytic, cf. Theorem
\ref{th:estimate_bergman}, and that the space of analytic symbols is closed
under $\star$, cf. Proposition \ref{prop:an_symb_alg}.

To do that, the two essential tools we will use are first the explicit
computation of the $A_{p}$ given in \cite{oim} and second a family of seminorms
already used in \cite{SjProp}. These seminorms are useful to
estimate coefficients of holomorphic differential operators and of their
compositions without using Leibniz formula. They were introduced in \cite{SjProp} to prove
that the inverse of an analytic symbol of a pseudodifferential operator is
analytic, a problem similar to ours. However there is the slight difference that
these seminorms are defined in \cite{SjProp} for the symbols themselves,
whereas we use them for the operators acting on symbols.

To prove the second part of Theorem \ref{th:intro}, that is the characterization of
analytic Berezin-Toeplitz operators in terms of their Schwartz kernel, a similar
difficulty arises: define the contravariant symbol of a Berezin-Toeplitz $(S_k)$
operator as the formal series $\sum \hbar^{\ell} f_{\ell}$ whose coefficients
are given in \eqref{eq:def_BT}; then  we have to show that the isomorphism $B$ of $\Ci ( M ) [[\hbar]]$ sending the contravariant symbol
 into the non normalised covariant symbol 
restricts to a bijection of the space of analytic symbols. To do this, we will
prove with the same methods
as before,  that $B$ and its inverse have the form $B = \sum \hbar^{\ell} B_{\ell}$ where the $B_{\ell}$ are
differential operators satisfying estimates similar to \eqref{eq:mest}. 

The paper is organised as follows. In section \ref{sec:symbolic-calculus}, we
recall the computation of the bidifferential operators $A_p$ from
\cite{oim} and prove our main estimates \eqref{eq:mest}. In section
\ref{sec:symb-bergm-kern}, we deduce that the symbol
of the Bergman projector is analytic. In section \ref{sec:analyt-berez-toepl},
we introduce analytic Berezin-Toeplitz operators by their Schwartz kernels,
and prove that they form an algebra with unit the Bergman kernel. In
section \ref{sec:multiplicator}, we prove that analytic Berezin-Toeplitz
operators can equivalently be defined by multipliers
\eqref{eq:def_an_ber_toep}. Last section is an appendix in two parts: in the
first part, we prove basic facts on analytic symbols which are essentially
known \cite{BoKr} and \cite{SjAs}; in the second part, we systematize some
of the techniques used in the previous parts. 

\vspace{.5cm}
\noindent
{\bf Acknowledgment:} I would like to thank Alix Deleporte for helpful
discussions about his work.

\section{Symbolic calculus} \label{sec:symbolic-calculus}

\subsection{Complexification} \label{sec:complexification}

Let $M$ be a complex manifold. We denote by $\con {M}$ the complex manifold,
which has the same underlying real manifold as $M$ but the opposite almost
complex structure. If $x\in M$, we denote by $\con x$ the corresponding point
of $\con M$.

The product $M \times \con {M}$ is a complexification of the
diagonal $\Delta_M = \{ (x, \con{x})/$ $x \in M\}$, in the sense that for any $x\in M$,
there exists a holomorphic chart $\psi: W \rightarrow \C^{2n}$ of $M \times \con M$ at $(x, \con
x)$ such that $\psi ( W \cap \Delta_M) = \psi (W)  \cap \R^{2n}$. This
has the consequence that any analytic function $f: \Delta_M \rightarrow \C$ has a
holomorphic extension $\tilde{f} : W \rightarrow \C$ on a neighborhood $W$ of
$\Delta_M$ in $M \times \con M$. We will often add a tilde to denote a
holomorphic extension. We will also often identify $\Delta_M$  with $M$.

Similarly, if $L \rightarrow M$ is a holomorphic line bundle, $\con{L}
\rightarrow \con M$ is the conjugate holomorphic line bundle, and any analytic section of the
restriction of $L
\boxtimes \con L$ to the diagonal has a holomorphic extension to a
neighborhood of the diagonal. Consider a Hermitian metric 
of $L$, which is analytic in the sense that for any local
holomorphic frame $s: U \rightarrow L$, $x \rightarrow |s(x)|^2$ is a analytic function of $U$.
Then the section of $L \boxtimes \con {L} \rightarrow \Delta_M$, sending
$(x, \con x)$ to $|u|^{-2}  u \otimes \con u$, $u \in L_x$ is
analytic, so it has a holomorphic extension $E : W \rightarrow L \boxtimes
\con {L}$. If $s$ is a holomorphic frame as above and $|s(x)|^2 = \exp ( -
\varphi (x))$, then we have the local expression for $E$
$$ E(x, \con{y} ) = e^{ \tilde{\varphi} (x, \con{y}) } s(x) \otimes
\con{s(y)} $$
where $\tilde \varphi$ is a holomorphic extension of $\varphi$. 
Sometimes, it is convenient to identify the tensor product $L_x \otimes \con
L_x$ with $\C$ through the metric, so that the restriction of
$L \boxtimes \con L$ to the diagonal becomes the trivial line bundle of $\Delta_M
$. With this convention, the restriction of $E$ to the diagonal is
simply the constant function equal to $1$. 

\subsection{Product formula} \label{sec:product-formula}
Consider a compact complex manifold $M$, with two Hermitian holomorphic line
bundles $L$ and $L'$. We assume that $L$ is positive. Let $f$ and $g$ be two analytic
functions of $M$.

Introduce as in Section \ref{sec:complexification}, holomorphic extensions $E : W \rightarrow L
\boxtimes \con L$, $E' : W \rightarrow L' \boxtimes \con L'$, $\tilde{f} : W
\rightarrow \C$, $\tilde{g} : W \rightarrow \C$ defined on the same
neighborhood $W$ of $\Delta_M$.  
The fact that $L$ is positive has the consequence that $|E(x,y_c)| < 1$  when
$ y_c \neq \con x$ and $(x,y_c)$ is sufficiently close to the diagonal \cite[Proposition 1]{oim}.
So
restricting $W$ if necessary, we can assume that $|E|<1$ on $W \setminus
\Delta_M$. For any integer $k \geqslant 1$ and $(x,y_c) \in W $, set
\begin{gather} \label{eq:defP_k}
  E_k (x,y_c) := \biggl( \frac{k}{2\pi} \biggr)^n  E^k (x,y_c) \otimes E' (x, y_c)
\end{gather}
Choose a smooth compactly supported function $\rho: W \rightarrow
\R$ which is equal to $1$ on a neighborhood of $\Delta_M$. Define
\begin{gather*}  T_k (x,y_c) =   \rho (x,y_c) E_k(x,y_c)  \tilde{f} (x,y_c) ,
 \\ S_k(x,y_c) =   \rho (x,y_c) E_k(x,y_c)  \tilde{g} (x,y_c). 
\end{gather*}
So $T_k$ and $S_k$ are smooth sections of $(L^k \otimes L') \boxtimes (\con{ L^k
  \otimes L'})$. They are Schwartz kernels of operators $T_k$, $S_k$ acting of $\Ci ( M ,
L^k \otimes L')$. Our convention for operator kernels is 
\begin{gather} \label{eq:S_Kernel}
(T_k \psi )(x) = \int_M T_k(x,\con y) \cdot \psi (y) \; d \mu (y), \qquad \psi
\in \Ci ( M , L^k \otimes L')
\end{gather}
where $\mu$ is the Liouville measure of $M$ and the the dot stands for the
scalar product of $(L^k \otimes L')_y$. Let $r_k$ be  the restriction of
the Schwartz kernel of $S_k \circ T_k$ to the diagonal
$$ r_k (x) = \int_{M} S_k (x,\con{y} ) \cdot T_k (y, \con{x} ) \; d\mu (y)
$$
By \cite{oim}, the sequence $(r_k)$ has an asymptotic expansion
$$ r_k (x) = \biggl(\frac{k}{2\pi} \biggr)^n \sum_{\ell=0}^N k^{-\ell} A_\ell (f,g) (x) +
\bigo ( k^{n- N-1}) , \qquad \forall N$$
and we can compute explicitly the coefficients $A_\ell
(f,g)(x)$ as follows.

For any $x_0 \in M$, choose holomorphic frames $s$, $s'$ of $L$ and $L'$ over
a connected  open set $U \ni x_0$. Set $|s|^2 = \exp ( - \varphi)$ and $|s'|^2 = \exp ( - \varphi')$.
Restricting $U$ if necessary, we have holomorphic extensions $\tilde \varphi$,
$\tilde \varphi'$ on $U \times \con{U} \subset W$. So $|E_k |^2 =  (k/2\pi)^{2n} \exp ( -k \psi - \psi')$
where
\begin{gather*}
  \psi (x,y) = - \tilde{\varphi} ( x, \con y ) -  \tilde{\varphi} ( y, \con x
  )  +  \tilde{\varphi} ( x, \con x )  +  \tilde{\varphi} ( y, \con y ) \\
   \psi' (x,y) = - \tilde{\varphi} '( x, \con y ) -  \tilde{\varphi}' ( y, \con x
)  +  \tilde{\varphi} '( x, \con x )  +  \tilde{\varphi}' ( y, \con y )
\end{gather*}
With the identification $L_x \otimes \con{L}_x \simeq \C$ given by the metric,
$E_k(x,\con y) \cdot E_k ( y, \con x) = |E_k (x,y) |^2$. So
$$ r_k (x) = \biggl( \frac{k}{2 \pi} \biggr)^{2n} \int_M  e^{ - k
  \psi (x,y) -\psi' (x,y)} \tilde{f}(x, \con y ) \tilde{g} ( y , \con x )  \rho' (x,y) \delta (y) \; d
\mu_L (y). $$
where $\rho'(x,y) = \rho ( x, \con y ) \rho ( y, \con x)$, $\mu_L$ is the Lebesgue measure of $\C^n$ and $\mu (x) = \delta (x) \mu_L(x)$.
Choosing a holomorphic chart with domain $U$, we consider $U$ as an open set
of $\C^n$. Let
$G_{ij} = \partial^2 \varphi/ \partial z_i \partial \con z_j$ and recall that
for any $x \in U$, 
$(G_{ij}(x))$ is positive definite because $L$ is a positive line bundle.

\begin{thm} \label{th:local_expression}
  For any $x \in U$, we have 
\begin{gather} \label{eq:local_expression}
  A_{\ell} (f,g) (x)  = b (x)\sum_{m =0 }^{2\ell} \frac{ \Delta^{\ell +m} (c^m
  d c' ) }{m ! ( \ell + m ) !} (x,0)
\end{gather}
where for $u \in \C^n$ sufficiently small 
\begin{xalignat*}{2}
  & b(x)  = (\det (G_{ij} (x)) )^{-1}, \quad c(x,u) =  \textstyle{\sum}_{i,j = 1}^n G_{ij} (x) u_i  \con{u}_j - \psi (x ,  x + u)  \\
  & d(x,u)  = \tilde{f} (x, \con{x} + \con{u} ) \tilde{g} ( x + u , \con {x} ),
  \qquad   c'(x,u)  = e^{-\psi' (x,x+u)} \delta (x+ u)
\end{xalignat*}
and  $\Delta  = \textstyle{\sum}_{i,j=1}^n G^{ij} (x) \partial_{u_i}
  \partial_{\con{u}_j}$, $(G^{ij}(x))$  being the inverse of
  $(G_{ij}(x))$.
\end{thm}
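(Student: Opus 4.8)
The plan is to compute the integral defining $r_k(x)$ by the stationary phase method in its analytic (or at least exact-Taylor) form, working locally on the chart $U$. First I would fix $x\in U$ and regard it as a parameter. The phase is $\psi(x,\cdot)$; its only critical point on the support of $\rho'$ is $y=x$, where $\psi(x,x)=0$, $d_y\psi(x,x)=0$, and the Hessian is the Kähler form $(G_{ij}(x))$, which is positive definite. After translating $y=x+u$, the integral becomes $\int e^{-k\psi(x,x+u)}\,\tilde f(x,\con x+\con u)\tilde g(x+u,\con x)\,\rho'\,e^{-\psi'(x,x+u)}\delta(x+u)\,d\mu_L(u)$, and with the notation of the statement this is $\int e^{-k(\,\sum G_{ij}u_i\con u_j - c(x,u)\,)}\,d(x,u)\,c'(x,u)\,\rho'(x,x+u)\,d\mu_L(u)$, up to the prefactor $(k/2\pi)^{2n}$.

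The second step is to separate the Gaussian part $e^{-k\sum G_{ij}u_i\con u_j}$ from the rest. Because $c(x,u)=O(|u|^3)$ (it is the difference of the quadratic approximation and the true phase) and $d,c'$ are holomorphic in $u$ near $0$, I would expand the non-Gaussian factor $e^{kc(x,u)}d(x,u)c'(x,u)$ — more precisely, since $c$ vanishes to third order, $kc(x,u)$ is $O(k|u|^3)$, and on the relevant scale $|u|\sim k^{-1/2}$ this is $O(k^{-1/2})$, so the expansion in powers of $k^{-1}$ is obtained by collecting terms. Writing $e^{kc}d\,c' = \sum_{m\ge0}\frac{k^m}{m!}c^m\,d\,c'$ and then applying the exact Gaussian moment formula — for a positive Hermitian form with matrix $(G_{ij})$, $(k/2\pi)^n\int_{\C^n} e^{-k\sum G_{ij}u_i\con u_j}h(u,\con u)\,d\mu_L(u) = b(x)\sum_{r\ge0}k^{-r}\frac{\Delta^r h}{r!}(0)$ for polynomial (or Taylor-expandable) $h$, with $\Delta=\sum G^{ij}\partial_{u_i}\partial_{\con u_j}$ — one pairs the $\frac{k^m}{m!}$ from the $c^m$ factor against the $k^{-r}$ from the Gaussian, and the total power of $k^{-1}$ is $\ell = r-m$, forcing $r=\ell+m$. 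This produces exactly $b(x)\sum_{m}\frac{1}{m!}\cdot\frac{\Delta^{\ell+m}(c^m d\,c')}{(\ell+m)!}(x,0)$. The bound $m\le 2\ell$ comes from the third-order vanishing of $c$: each factor of $c$ contributes a vanishing to order $3$ at $u=0$, so $c^m$ vanishes to order $3m$, and $\Delta^{\ell+m}$ is a differential operator of order $2(\ell+m)$; a nonzero contribution requires $3m\le 2(\ell+m)$, i.e. $m\le 2\ell$. (Equivalently, one cuts off the $u$-integral to $|u|<k^{-1/2+\epsilon}$ at the cost of an error $O(k^{-\infty})$, Taylor-expands on that ball, and controls the tail by the positivity $\sum G_{ij}u_i\con u_j\ge c|u|^2$; the cutoff $\rho'$ and the factor $\delta$ are irrelevant to the diagonal coefficients since $\rho'\equiv1$ and can be absorbed into $c'$.)

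The third step is to check that this formal manipulation matches the asymptotic expansion asserted just before the theorem, i.e. that the remainder is genuinely $O(k^{n-N-1})$ after summing $\ell$ up to $N$; this is the standard stationary-phase remainder estimate, using that all the data are smooth (indeed analytic) and $M$ is compact so everything is uniform in $x$. The main obstacle, and the only genuinely delicate point, is the bookkeeping of the two competing powers of $k$ — the $k^m$ generated by expanding $e^{kc}$ against the $k^{-r}$ of the Gaussian moments — and justifying that the double series can be reorganized by the single index $\ell=r-m$ with only finitely many $(m,r)$ contributing to each $\ell$; the vanishing order of $c$ is what makes this finite and is the crux. Everything else — the Gaussian moment formula, the cutoff argument, the uniformity — is routine once that reorganization is in place.
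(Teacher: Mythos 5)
Your proposal is correct and follows essentially the same route as the paper: the paper rewrites $r_k(x)$ as $(k/2\pi)^{2n}\int e^{-k\sum G_{ij}u_i\con u_j}e^{kc}d\,c'\rho'\,d\mu(u)$, notes $c=\bigo(|u|^3)$, and invokes Laplace's method (H\"ormander, Theorem 7.7.5), which is precisely the expansion of $e^{kc}$ against Gaussian moments with the index bookkeeping $r=\ell+m$, $m\leqslant 2\ell$ that you carry out explicitly.
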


\begin{proof}We can rewrite the integral in terms of the functions  $c$, $c'$, 
  $d$
  $$ r_k (x) = \biggl( \frac{k}{2 \pi} \biggr)^{2n} \int  e^{ -
    k \sum G_{ij} (x) u_i \con{u}_j} e^{k c(x,u)} d (x,u) c'(x,u)  \rho'
  (x,x+u)  d\mu (u) $$
Notice that $c(x,u) = \bigo ( |u|^3)$. The result follows from Laplace's method, cf.
for instance \cite[Theorem 7.7.5]{Horm}. 
\end{proof}
The proof in \cite{oim} was much longer because we computed there the
Schwartz kernel of $S_k \circ T_k$ on $M \times \con{M}$ up to a $\bigo (
k^{-\infty})$, which requires the stationary phase lemma  with complex valued
phase depending on parameters. In Section \ref{sec:analyt-berez-toepl}, we will see the similar result in the analytic setting.

Since $c' (x,0) = \delta (x) = \det (G_{ij} (x))$, we have
$$A_0 ( f,g)= fg.$$
In the sequel, we will use the following holomorphic extension of
\eqref{eq:local_expression}. For any holomorphic function $(u,v ) \rightarrow \ti d(u,v)$ defined on a
neighborhood of the origin in $\C^n
\times \C^n$, we set
\begin{gather} \label{eq:def_Pell}
P_{\ell} ( \ti d) (x,y_c) = \tilde b (x, y_c)\sum_{m =0 }^{2\ell} \frac{ \wt
  \Delta^{\ell +m} (\ti c^m
  \ti d \ti c' ) }{m ! ( \ell + m ) !} (x,y_c, 0, 0 ) 
\end{gather}
Here $\ti b$, $\ti c$, $\ti c'$ are holomorphic extensions of $b$, $c$, $c'$
respectively. So $\ti b$ depends holomorphically on $x$, $y_c$ and $b( x) = \ti b (x, \con x)$, $\ti c$ and $\ti c'$ are holomorphic functions of the variables $(x,y_c, u,v)$ such that $c
( x,u) = \ti c ( x,\con x , u , \con u )$, $c' ( x,u) = \ti c' ( x,\con x , u
, \con u )$. Similarly, $\wt \Delta $ is the operator $ \sum_{i,j=1}^n \wt G^{ij} (x,y_c)
\partial _{u_i} \partial_{v_j}$. Then the function $A_{\ell} (f,g)$ of Theorem \ref{th:local_expression} has
the holomorphic extension
\begin{gather} \label{eq:B_extension}
\tilde{A}_{\ell} (\tilde f ,  \tilde g) (x, y_c) = P_{\ell} ( \ti d_{(x,y_c)})
(x,y_c) 
\end{gather}
with $\ti d_{(x,y_c)} (u,v)  = \tilde f (x, y_c + v) \tilde g ( x+ u, y_c)$.

Even if we don't need it, let us observe that everything can be explicitly computed
in terms of $\tilde \varphi$ and $\tilde \varphi'$. Indeed, $( \wt G ^{ij} (x,y_c))$
is the inverse of $\wt G_{ij} ( x,y_c) = (\partial^2 \tilde \varphi / \partial
x_i \partial y_{c,j} ) ( x, y_c) $, $\ti b = \det \wt G_{ij}$, $\ti c (x,y_c, u, v)  =  \tilde{\varphi} ( x, y_c + v) +
\tilde{\varphi} ( x + u , y_c) - \tilde{\varphi} (x,y_c)  - \tilde{\varphi}
(x+u, y_c + v)  + \sum_{i,j} \wt G_{ij} (x, y_c) u_i v_{j}$ and there is a similar
formula for $\ti c' (x,y_c, u, v)$. 

\subsection{Main estimates} \label{sec:main-estimates}

Consider the local expression of $A_{\ell} (f,g)$ given in Theorem
\ref{th:local_expression}.
\begin{lm} \label{lem:maj_ordre_derivee}
  We have
$$\frac{1}{\ell !}  A_{\ell} (f,g) = \sum_{|\al|, |\be| \leqslant
  \ell}    a_{\ell, \al , \be} \frac{  (\partial_{\con z}^\be f)  
(\partial_z ^\al g) }{\al! \be !}  $$
where $\al$, $\be$ are multi-indices of $\N^n$, $a_{\ell, \al, \be}$ are
analytic function of $U$, the derivatives are $\partial_z ^\al = \partial_{z_1}^{\al(1)} \ldots \partial_{z_n}^{\alpha (n)}$, $\partial_{\bar z}^\be = \partial_{\bar z_1}^{\be(1)} \ldots \partial_{\bar z_n}^{\be (n)}$.
\end{lm}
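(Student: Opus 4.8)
The statement to prove is Lemma~\ref{lem:maj_ordre_derivee}: that $\frac{1}{\ell!}A_\ell(f,g)$ can be written as $\sum_{|\al|,|\be|\leqslant\ell} a_{\ell,\al,\be}\,\frac{(\partial_{\con z}^\be f)(\partial_z^\al g)}{\al!\,\be!}$ with analytic coefficients $a_{\ell,\al,\be}$ on $U$. The plan is to start from the explicit formula \eqref{eq:local_expression} and track how the functions $f$ and $g$ enter. They enter only through $d(x,u)=\tilde f(x,\con x+\con u)\tilde g(x+u,\con x)$, so everything hinges on expanding $d$ in the variables $u,\con u$ around $u=0$.

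First I would observe that, since the right-hand side of \eqref{eq:local_expression} involves $\Delta^{\ell+m}(c^m d c')$ evaluated at $u=0$ (with $\Delta$ a constant-coefficient-in-$u$ second-order operator whose coefficients $G^{ij}(x)$ are analytic in $x$), only finitely many Taylor coefficients of $c^m d c'$ in $(u,\con u)$ at $u=0$ are used: namely those of total degree at most $2(\ell+m)\leqslant 6\ell$, since $\Delta^{\ell+m}$ is an operator of order $\ell+m$ in $u$ and order $\ell+m$ in $\con u$. Next, expanding $d(x,u)=\tilde f(x,\con x+\con u)\tilde g(x+u,\con x)$ by Taylor's formula in $u$ and $\con u$ separately gives $d(x,u)=\sum_{\al,\be} \frac{u^\al}{\al!}\frac{\con u^{\,\be}}{\be!}(\partial_{\con z}^\be f)(x)(\partial_z^\al g)(x)$; because only finitely many monomials $u^\al\con u^{\,\be}$ survive after applying $\Delta^{\ell+m}$ and setting $u=0$, only terms with $|\al|\leqslant \ell+m\leqslant 3\ell$ and $|\be|\leqslant\ell+m\leqslant 3\ell$ contribute. (The sharper bound $|\al|,|\be|\leqslant\ell$ asserted in the lemma will come from the fact that $c=O(|u|^3)$ and $c'(x,0)=\det(G_{ij}(x))\neq 0$: the factor $c^m$ already soaks up at least $3m$ powers of $u$, so for the $m$-th term only derivatives of $d$ of order $\leqslant (\ell+m)-\tfrac{3m}{?}$ in each of $u$, $\con u$ survive — I would make this counting precise to land exactly at the exponent $\ell$.) Collecting, $A_\ell(f,g)(x)$ becomes a finite sum $\sum_{\al,\be}\tilde a_{\ell,\al,\be}(x)\,(\partial_{\con z}^\be f)(x)(\partial_z^\al g)(x)$ where each $\tilde a_{\ell,\al,\be}(x)$ is obtained from $b(x)$, the Taylor coefficients of $c(x,\cdot)$ and $c'(x,\cdot)$ at $0$, and the entries $G^{ij}(x)$ — all analytic in $x$ on $U$ — by finitely many products, sums, and (analytic) operations. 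Dividing by $\ell!$ and absorbing the $\frac{1}{\al!\be!}$ into the definition of $a_{\ell,\al,\be}$ gives the claim, with analyticity of $a_{\ell,\al,\be}$ inherited from analyticity of $b$, $c$, $c'$, and $(G^{ij})$ (the last being analytic since $(G_{ij})$ is analytic and invertible).

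The one point requiring genuine care — and the main obstacle — is the bookkeeping that yields exactly the bound $|\al|,|\be|\leqslant\ell$ rather than the crude $\leqslant 3\ell$. The mechanism is that in the $m$-th summand $\frac{\Delta^{\ell+m}(c^m d c')}{m!(\ell+m)!}(x,0)$, each of the $m$ factors of $c$ contributes at least a cubic vanishing in $u$ at the origin, so $\Delta^{\ell+m}$ — which can only differentiate, in the holomorphic variables $u$, up to order $\ell+m$ in total — must "spend" at least enough of those $\ell+m$ derivatives killing the zeros of $c^m$ to leave at most $\ell$ acting on $d$; symmetrically in $\con u$ via the antiholomorphic part of $c$. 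Making this vanishing-order argument airtight (and checking the edge cases $m=0$ and $m$ near $2\ell$) is where the work lies; the rest is routine expansion and the observation that all ingredients are real-analytic in $x$. I would not belabor the explicit form of $a_{\ell,\al,\be}$, since only their analyticity is needed here (the quantitative bound \eqref{eq:mest} being the subject of the subsequent Theorem~\ref{th:main_estimate}).
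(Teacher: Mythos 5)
Your overall strategy is the same as the paper's (expand $\Delta^{\ell+m}(c^m d c')(x,0)$ by Leibniz/Taylor, observe that $f,g$ enter only through the derivatives $(\partial_u^\al\partial_{\con u}^{\be}d)(x,0)=(\partial_{\con z}^{\be}f)(\partial_z^{\al}g)$, and read off analytic coefficients), and the parts you do carry out — finiteness of the sum, analyticity of the coefficients — are fine. But the one step you yourself flag as "the main obstacle" is left genuinely open, and the mechanism you propose for closing it would not work. You argue from the \emph{total} vanishing $c(x,u)=\bigO{|u|^3}$, so that $c^m$ "soaks up at least $3m$ powers of $u$". That only controls $|\al|+|\be|$: it yields $|\al|+|\be|\leqslant 2(\ell+m)-3m=2\ell-m$ for the derivatives landing on $d$, which does not imply $|\al|\leqslant\ell$ and $|\be|\leqslant\ell$ separately. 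Concretely, if $c$ were allowed a purely antiholomorphic cubic monomial such as $\con u_1^{\,3}$, then in $\Delta^{\ell+m}(c^m dc')(x,0)$ all $\ell+m$ holomorphic derivatives could land on $d$, giving $|\al|=\ell+m>\ell$. Your "symmetrically in $\con u$" remark gestures at the right idea but does not identify the fact that makes it true.

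The missing ingredient, which is what the paper's proof actually uses, is the \emph{bidegree} vanishing of $c$: one checks that $(\partial_u^\al c)(x,0)=(\partial_{\con u}^{\be}c)(x,0)=0$ for all $\al,\be$, i.e.\ every monomial $u^\al\con u^{\be}$ in the Taylor expansion of $u\mapsto c(x,u)$ has $|\al|\geqslant 1$ \emph{and} $|\be|\geqslant 1$ (this follows from the explicit formula for $\psi$, since $\psi(x,x+u)$ restricted to $\con u=0$ or to $u=0$ vanishes identically, and each term $G_{ij}u_i\con u_j$ is already of bidegree $(1,1)$). Hence every monomial of $c^m$ has holomorphic and antiholomorphic degree each at least $m$, so $\Delta^{\ell+m}$, being a combination of $\partial_u^{\al}\partial_{\con u}^{\be}$ with $|\al|=|\be|=\ell+m$, must spend at least $m$ holomorphic and at least $m$ antiholomorphic derivatives on $c^m$ to produce a nonzero value at $u=0$, leaving at most $\ell$ of each for $d$. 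With that fact supplied, your argument closes; without it, the bound $|\al|,|\be|\leqslant\ell$ is not established.
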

\begin{proof}
Observe first that  $(\partial^\al_u \partial^\be_{\bar{u}} d)(x,0) =
(\partial_{\bar z}^\be f) (x)
(\partial_z ^\al g) (x)$. Second, we have $(\partial^\al_u c)(x,0) =
(\partial^\be_{\bar{u}} c)(x,0) =0$, so in the Taylor expansion of $u
\rightarrow c(x,u)$ at $0$, only appear monomials of the form $u^\al
\bar{u}^\be$ with $|\al|, |\be| \geqslant 1$. Thus the Taylor expansion of $u
\rightarrow c^m (x,u)$ has only monomials of the form $u^\al
\bar{u}^\be$ with $|\al|, |\be| \geqslant m$. Now $\Delta^{m+\ell}$ is a
linear combination of $\partial_u^{\al} \partial_{\bar u}^\be$ with $|\al|=
|\be| = m+ \ell$. By the previous remark, expanding $\Delta^{m+ \ell} ( c^m
d c') (x,0)$ with the Leibniz rule, only the terms with at least $m$ holomorphic
derivatives and $m$ antiholomorphic derivatives on $c^{m}$
will not be zero, so it remains at most $\ell$ holomorphic and $\ell$
antiholomorphic derivatives on $d$.   
\end{proof}

The functions $a_{\ell, \al, \be}$ can be computed by expanding (\ref{eq:local_expression}). Since the functions $b$, $c$, $c'$ in (\ref{eq:local_expression}) have holomorphic extensions to $(x,x_c) \in U \times \con U$, each $a_{\ell, \al, \be}$ has a holomorphic extension $\tilde a _{\ell, \al, \be} : U \times \con U \rightarrow \C$. 
We can now state our main estimate. 
\begin{thm} \label{th:main_estimate}
  For any compact subset $K$ of $U \times \con U$, there exists $C>0$ such that for any $\ell$, $\al$, $\be$,
\begin{gather} \label{eq:main_ineq}
| \tilde{a}_{\ell, \al , \be} (x,y_c) | \leqslant C^{\ell+1}, \qquad
\forall (x,y_c) \in K .
\end{gather}
\end{thm}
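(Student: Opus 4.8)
The plan is to extract the estimate from the explicit formula \eqref{eq:def_Pell}--\eqref{eq:B_extension} by controlling the three ingredients $\ti b$, $\ti c$, $\ti c'$ and the iterated operator $\wt\Delta^{\ell+m}$ separately, and then combining them. The key observation is that $a_{\ell,\al,\be}$ is obtained by picking out the coefficient of $u^\al v^\be$ (up to factorials) in a holomorphic function built from $\ti b$, $\ti c$, $\ti c'$ and $\wt\Delta$, so by Cauchy's inequalities on a fixed polydisc it suffices to bound the sup norm on a slightly larger polydisc of the holomorphic functions
$$ (x,y_c,u,v) \;\longmapsto\; \ti b(x,y_c)\,\frac{\wt\Delta^{\ell+m}\bigl(\ti c^{\,m}\,\ti d\,\ti c'\bigr)(x,y_c,u,v)}{m!\,(\ell+m)!}, \qquad 0\le m\le 2\ell, $$
where now $\ti d$ is taken to be the monomial-extracting test function; since $f,g$ do not appear in $a_{\ell,\al,\be}$ one may simply set $\ti d\equiv 1$ and track the $u,v$-dependence coming from $\ti c,\ti c'$. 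So the real content is: on a fixed polydisc $P$ (chosen so that all the holomorphic extensions and $\wt G^{ij}$ are defined and bounded on a neighborhood of $\con P$, using compactness of $K$), bound $\sup_{P}|\wt\Delta^{\ell+m}(\ti c^{\,m}\ti c')|$ by something like $D^{\ell+m}(\ell+m)!$, which together with the $(\ell+m)!$ in the denominator and the $\binom{2\ell}{m}\le 2^{2\ell}$ from summing over $m$ gives the geometric bound $C^{\ell+1}$.

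The step I expect to be the main obstacle is precisely the bound $\sup_P|\wt\Delta^{N}(\ti c^{\,m}\ti c')| \lesssim D^{N} N!$ for $N=\ell+m$. A naive Leibniz expansion of $\wt\Delta^N$ applied to a product is hopeless because of the combinatorial explosion; this is exactly the difficulty flagged in the introduction, and the paper signals that the remedy is "a family of seminorms already used in \cite{SjProp}" designed "to estimate coefficients of holomorphic differential operators and of their compositions without using Leibniz formula." So the intended route is: introduce, for a holomorphic differential operator $Q=\sum_{\ga} q_\ga(x,y_c)\partial^\ga$ on the polydisc, a seminorm $\|Q\|$ (roughly $\sum_\ga \|q_\ga\|_{\infty,P}\,r^{|\ga|}/\ga!$ for a fixed radius $r$, or the analogous generating-function norm) which is \emph{submultiplicative}, $\|Q_1 Q_2\|\le\|Q_1\|\,\|Q_2\|$, and which dominates the sup norm of $Q(h)$ on a smaller polydisc in terms of $\sup|h|$ on $P$. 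Since $\wt\Delta=\sum_{i,j}\wt G^{ij}\partial_{u_i}\partial_{v_j}$ has $\|\wt\Delta\|\le \mathrm{const}$ (a single application), one gets $\|\wt\Delta^N\|\le \mathrm{const}^N$; applying this to $h=\ti c^{\,m}\ti c'$ and using that $\ti c$, $\ti c'$ are bounded on $\con P$ (so $\|\ti c^{\,m}\ti c'\|_\infty\le D_1^{m+1}$) would give $\sup|\wt\Delta^N(\ti c^{\,m}\ti c')|\le \mathrm{const}^N D_1^{m+1}$ — but this is missing the factor $N!$, which is actually fine: we \emph{want} no $N!$, and the $(\ell+m)!$ in the denominator of \eqref{eq:def_Pell} is then pure gain, absorbed into $C$. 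The one subtlety is that $\ti c$ vanishes to order $3$ at $u=v=0$ (Taylor monomials $u^\al v^\be$ with $|\al|,|\be|\ge 1$), which is what makes Lemma \ref{lem:maj_ordre_derivee} work and ensures that after extracting the $u^\al v^\be$ coefficient with $|\al|,|\be|\le\ell$ nothing blows up; I would incorporate this by noting that only $\le\ell$ of the $N=\ell+m$ derivatives in $\wt\Delta^N$ can "escape" $\ti c^{\,m}$, so effectively we are extracting a coefficient of order $\le 2\ell$ from a function of the above type.

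To assemble the argument I would proceed in this order. (1) Fix $K\subset U\times\con U$ compact; choose $r>0$ and a polydisc $P=P_r$ in the $(u,v)$ variables and a compact neighborhood $K'$ of $K$ such that $\ti b$, $\wt G^{ij}$, and $\ti c,\ti c'$ (as functions of $(x,y_c,u,v)\in K'\times P_{2r}$) are holomorphic and bounded, with $\det\wt G_{ij}$ bounded below. (2) Record, via Cauchy estimates on $P_{2r}$, that $a_{\ell,\al,\be}(x,y_c)=\frac{\al!\be!}{(\text{stuff})}\times$ coefficient of $u^\al v^\be$ in $\ell!\,P_\ell(\ti d)$ with $\ti d$ the monomial test function, hence $|\ti a_{\ell,\al,\be}|\le r^{-|\al|-|\be|}\sup_{P_r}|\,\ell!\,P_\ell(\ti d)\,|$, and since $|\al|,|\be|\le\ell$ this contributes at most $r^{-2\ell}$, absorbed in $C$. (3) Set up the seminorm on holomorphic differential operators over $K'\times P_{2r}$, prove submultiplicativity and the domination-of-sup property (this is where I cite/adapt \cite{SjProp}; the appendix of the paper is presumably where these are systematized). (4) Apply it to $\wt\Delta^{\ell+m}$ acting on $\ti c^{\,m}\ti c'$, getting a bound $D^{\ell+m}\cdot D_1^{m+1}$; multiply by $\sup|\ti b|$, divide by $m!(\ell+m)!$, sum $m$ from $0$ to $2\ell$ (at most $2\ell+1$ terms, each already geometric in $\ell$), and collect everything into a single constant $C=C(K)$, yielding $|\ti a_{\ell,\al,\be}|\le C^{\ell+1}$ uniformly on $K$ and independently of $\al,\be$. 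The uniformity in $\al,\be$ is automatic because nowhere did the bound depend on them beyond the harmless $r^{-|\al|-|\be|}\le r^{-2\ell}$ factor.
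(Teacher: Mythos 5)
Your skeleton matches the paper's: reduce to evaluating $P_\ell$ on monomials of sup norm $\leqslant 1$, control multiplication by $\ti c^{\,m}$ using that $\ti c$ vanishes at the origin, and estimate the iterate $\wt \Delta^{\ell+m}$ via the submultiplicative seminorms of \cite{SjProp} rather than by Leibniz. But there is a genuine error at the crux, in your steps (3)--(4). You posit a seminorm on holomorphic differential operators that is simultaneously submultiplicative, has $\| \wt \Delta \|$ bounded, and dominates the operator norm $\mathcal{B}(P) \rightarrow \mathcal{B}(P')$ for a \emph{fixed} smaller polydisc $P'$, and you conclude $\sup_{P'} |\wt \Delta^N h| \leqslant \mathrm{const}^N \sup_P |h|$ with ``no $N!$''. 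This is false: already for $n=1$ and $h = (uv)^N$ one has $\wt\Delta^N h(0,0) = (\wt G^{11})^N (N!)^2$ while $\sup_{\Om_r}|h| \leqslant r^{2N}$, so the evaluation functional $h \mapsto (\wt\Delta^N h)(0,0)$ on $\mathcal{B}(\Om_r)$ has norm of order $(N!)^2/r^{2N} \sim N^{2N}$. No seminorm with your three properties can exist. What the seminorms of \cite{SjProp} actually provide (Lemma \ref{lem:dertech}) is the \emph{two-parameter} bound $\| \wt\Delta \|_{t,s} \leqslant C/(t-s)^2$ for all $0<s<t$, and the proportional-splitting composition rule of Lemma \ref{lem:dertech}(2) then gives $\| \wt\Delta^N \|_{t,s} \leqslant C^N N^{2N}/(t-s)^{2N}$: the factor $N^{2N}$ is unavoidable and present.

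Consequently your bookkeeping does not close as written: the step you dismiss as ``pure gain'' is where the work is. With $N = \ell+m$, one must cancel $N^{2N}$ against the denominator $m!\,(\ell+m)!$ of \eqref{eq:def_Pell}; using $p^p \leqslant e^p p!$ and $(p+q)! \leqslant 2^{p+q}p!\,q!$ this gives $(\ell+m)^{2(\ell+m)}/((\ell+m)!\,m!) \leqslant (2e^2)^{\ell+m}\,\ell!$, so that summing over $m \leqslant 2\ell$ yields a bound of the form $C^{\ell+1}\ell!$, and the residual $\ell!$ is absorbed precisely by the normalization $\frac{1}{\ell!}A_\ell$ in Lemma \ref{lem:maj_ordre_derivee} defining $a_{\ell,\al,\be}$ (a normalization your step (2) gestures at but never actually invokes). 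So the missing ingredient is not the choice of a single seminorm but the family $\| \cdot \|_{t,s}$ indexed by nested radii together with this explicit factorial cancellation; a fixed-polydisc seminorm with the domination property you require cannot be constructed, and without the cancellation the estimate $C^{\ell+1}$ is not reached.
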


It is possible to deduce Theorem \ref{th:main_estimate} directly from Theorem \ref{th:local_expression}
by repeated applications of Leibniz formula. Instead, we will present a
less computational argument based on a family of seminorms considered in \cite{SjProp}. 
Let $\Om_r$ be the open ball of $\C^n \times \C^n$ centered at the
origin with radius $r$. Let $\mathcal{B}(\Om_r)$ be the space of holomorphic
bounded functions of $\Om_r$, with the norm $\| f \|_r = \sup_{\Om_r} |f|$.
For any bounded operator $P:\mathcal{B} ( \Om_t)
\rightarrow \mathcal{B} (\Om_s)$, we denote by  $ \| P \|_{t,s} = \sup \{ \|
Pf \|_s /\; \| f \|_t \leqslant 1 \}$ the corresponding norm. 

\begin{lm} \label{lem:dertech}$  $
  \begin{enumerate}
  \item   There exists $C>0$ such that for any $\ga \in \N^{2n}$ and $0<s<t$
    we have $$  \bigl\| \partial^\ga \bigr\|_{t,s}
  \leqslant \frac{C^{| \ga | + 1 } \ga ! }{ ( t-s )^{|\ga|}}.$$
  \item  Let $t_0 >0$, $p, q \in \N$, $C>0$ and $P,Q$ be two operators $ \mathcal B (\Om_{t_0}) \rightarrow \mathcal{B} (
    \Om_{t_0})$ such that for any $0<s <t \leqslant t_0$,   $\| P \|_{t,s} \leqslant
(Cp)^p/(t-s)^p$ and $\| Q \|_{t,s} \leqslant
(Cq)^q/(t-s)^q$. Then for any $0<s <t \leqslant t_0$,  $$ \| P \circ Q \|_{t,s}
\leqslant \frac{(C(p+q))^{p+q}}{(t-s)^{p+q}}. $$
  \end{enumerate}
\end{lm}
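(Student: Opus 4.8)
The plan is to prove Lemma \ref{lem:dertech} in two essentially independent parts, both of which are classical Cauchy-estimate-style arguments for spaces of bounded holomorphic functions on concentric balls.

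For the first part, I would start from the one-variable Cauchy inequality and iterate. Given $f \in \mathcal{B}(\Om_t)$ and a point $z \in \Om_s$, the closed polydisc of polyradius $(t-s)/(2n)$ around $z$ (say, in the $\ell^1$ sense so it fits inside $\Om_t$ with room to spare; one only needs some fixed fraction of $t-s$) is contained in $\Om_t$, so iterated Cauchy estimates give $|\partial^\ga f(z)| \leqslant \ga!\, \|f\|_t \, \bigl( (2n)/(t-s) \bigr)^{|\ga|}$, up to adjusting the geometric constant to account for using balls rather than polydiscs. Taking the supremum over $z \in \Om_s$ yields $\|\partial^\ga\|_{t,s} \leqslant \ga!\, C_0^{|\ga|}/(t-s)^{|\ga|}$ with $C_0$ depending only on $n$, and absorbing one more factor of $C_0$ gives the stated $C^{|\ga|+1}\ga!/(t-s)^{|\ga|}$. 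The only mild care needed is the elementary geometry of fitting a polydisc inside the ball $\Om_t$ centered at a point of $\Om_s$; this is where the dimensional constant enters, and it is routine.

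For the second part, the idea is the standard trick for composing operators with this kind of scale-dependent bound: insert an intermediate radius. Fix $0 < s < t \leqslant t_0$ and choose $r$ with $s < r < t$; the natural choice will be to split the gap $t-s$ in proportion to $q$ and $p$, i.e. $r = s + \frac{q}{p+q}(t-s)$, so that $r - s = \frac{q}{p+q}(t-s)$ and $t - r = \frac{p}{p+q}(t-s)$. Then
\begin{align*}
\| P \circ Q \|_{t,s} \leqslant \| P \|_{r,s}\, \| Q \|_{t,r}
\leqslant \frac{(Cp)^p}{(r-s)^p} \cdot \frac{(Cq)^q}{(t-r)^q}
= \frac{(Cp)^p (p+q)^p}{q^p (t-s)^p} \cdot \frac{(Cq)^q (p+q)^q}{p^q (t-s)^q}.
\end{align*}
Collecting powers, the right-hand side equals $\dfrac{C^{p+q}(p+q)^{p+q}}{(t-s)^{p+q}} \cdot \dfrac{p^p q^q}{q^p p^q} \cdot \dfrac{1}{1}$, and one checks $\dfrac{p^p q^q}{q^p p^q} = \Bigl(\dfrac{p}{q}\Bigr)^{p-q} \leqslant 1$ when, say, $p \geqslant q$ — and by symmetry in the other case — so this factor is at most $1$. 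Hence $\| P \circ Q \|_{t,s} \leqslant (C(p+q))^{p+q}/(t-s)^{p+q}$, as desired. One should also dispose of the degenerate cases $p=0$ or $q=0$ separately (then one of the operators obeys $\|\cdot\|_{t,s} \leqslant 1$ uniformly, by the convention $0^0=1$, and the claim is immediate after noting $\mathcal{B}(\Om_t) \hookrightarrow \mathcal{B}(\Om_s)$ is a contraction).

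I do not expect a serious obstacle here: both parts are Cauchy-estimate bookkeeping. The one spot that requires a moment's thought is verifying that the split-proportionally choice of the intermediate radius $r$ is exactly what makes the $p,q$-dependent constants recombine into $(p+q)^{p+q}$ with a harmless leftover factor $\leqslant 1$; getting the inequality $p^p q^q \leqslant q^p p^q$ (equivalently $(p/q)^{p-q}\leqslant 1$) in the right direction is the only place a sign could go wrong, and it is settled by the elementary observation that $x \mapsto x \log x$ is such that $(p-q)(\log p - \log q) \geqslant 0$ always.
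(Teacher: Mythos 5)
Your part 1 is fine and matches the paper's (one-line) argument: iterated Cauchy estimates on a polydisc of polyradius a fixed fraction of $t-s$ fitted inside $\Om_t$ around each point of $\Om_s$.

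Part 2, however, contains a genuine error: you chose the intermediate radius with the proportions swapped, and as a result the final inequality is asserted in the wrong direction. With your choice $r-s=\frac{q}{p+q}(t-s)$, $t-r=\frac{p}{p+q}(t-s)$, the leftover factor is indeed $\frac{p^pq^q}{q^pp^q}=(p/q)^{p-q}$, but this is $\geqslant 1$, not $\leqslant 1$: for $p\geqslant q$ one has $p/q\geqslant 1$ and $p-q\geqslant 0$, so $(p/q)^{p-q}\geqslant 1$. Your own closing sanity check proves exactly this — $(p-q)(\log p-\log q)\geqslant 0$ says the logarithm of the factor is nonnegative — so it contradicts the "$\leqslant 1$" you need. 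The factor can be large (e.g.\ $p=n$, $q=1$ gives $n^{n-1}$), so the written argument does not establish the stated bound. The fix is to match each exponent to its own gap: take $r-s=\frac{p}{p+q}(t-s)$ and $t-r=\frac{q}{p+q}(t-s)$, so that
\begin{equation*}
\frac{(Cp)^p}{(r-s)^p}\cdot\frac{(Cq)^q}{(t-r)^q}
=\frac{(Cp)^p(p+q)^p}{p^p(t-s)^p}\cdot\frac{(Cq)^q(p+q)^q}{q^q(t-s)^q}
=\frac{(C(p+q))^{p+q}}{(t-s)^{p+q}},
\end{equation*}
with exact cancellation of $p^p$ and $q^q$ and no leftover factor at all. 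This is precisely the choice made in the paper (following Sj\"ostrand). With that one swap, and your separate treatment of $p=0$ or $q=0$, the proof is complete.
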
 

\begin{proof}
First assertion follows from Cauchy inequality. For the second
assertion, we follow \cite[page 69]{SjProp}: let 
 $r \in [s,t]$ be such that $r-s = \frac{p}{p+q} (t-s)$ and
$t-r=\frac{q}{p+q} ( t -s )$. Then
$$ \| P \circ Q \|_{t,s}  \leqslant  \| P \|_{r,s} \| Q \|_{t,r} \leqslant
\frac{(Cp)^p}{(r-s)^p} \frac{(Cq)^q}{(t-s)^q} =
\frac{(C(p+q))^{p+q}}{(t-s)^{p+q}} $$
as was to be proved.
\end{proof}
\begin{proof}[Proof of Theorem \ref{th:main_estimate}]
  The $\tilde a_{\ell, \al, \be}$ are given in terms  of the
  operator $P_\ell$ defined in \eqref{eq:def_Pell} by
\begin{gather} \label{eq:7} 
  \tilde a _{\ell, \al, \be} (x,y_c) = P_{\ell} ( u^\be v^{\al} ) (x,y_c)
\end{gather}
 For any $0<r
\leqslant 1$, $\| u^\be v ^\al \|_{r} \leqslant 1$. We will estimate the norm of $\mathcal{B} ( \Om_r) \rightarrow \C$, $\ti d  \rightarrow
P_{\ell} (\ti d) (x,y_c)$. In the sequel, the constants $C$, $C'$ depend on
$(x,y_c)$, but remain bounded as $(x,y_c)$ stays in a compact set. 

Since the function $\ti c ( x,y_c,u,v)$ vanishes when $u = v=0$, the multiplication operator
\begin{gather} \label{eq:5}
\mathcal{B} ( \Om_r)
  \rightarrow \mathcal{B} ( \Om_r) , \qquad \tilde d  (u,v) \rightarrow \ti c(x,y_c,
  u,v) \ti d (u,v)
\end{gather}
  has a norm  smaller than $1$ when $r$ is sufficiently
  small. So the same holds for the multiplication by $\ti c^m$.

  If $r$ is sufficiently small, there exists a constant $C$ such that for any $m$,
\begin{gather} \label{eq:6}
  \mathcal{B} ( \Om_r)
  \rightarrow \C, \qquad  \ti d (u,v) \rightarrow ( \wt \Delta^m \ti d) (0,0)
\end{gather}
  has a norm smaller than $C^m m^{2m}$. Indeed, by assertion 1 of Lemma \ref{lem:dertech}, $ \| \wt \Delta \|_{t,s}
  \leqslant C/ ( t-s )^{2}$ when $0<s<t$ are sufficiently small. So by
  assertion 2 of Lemma \ref{lem:dertech}, for any $m \in \N$, $ \| \wt \Delta ^m  \|_{t,s}
  \leqslant C^m m^{2m}/ ( t-s )^{2m}$ with the same constant $C$. To conclude,
  choose
  $(t,s ) = ( r,0)$ and replace $(C/r^2)$ by $C$. 

The norm estimates of (\ref{eq:5}) and (\ref{eq:6})  imply through (\ref{eq:7}) that
$$ | \tilde a _{\ell, \al , \be}  (x,x_c)| \leqslant C' \sum_{m = 0 }^{2\ell}
C^{\ell + m }  \frac{ (\ell + m ) ^{2 ( \ell  + m ) }}{(\ell + m )! m !}
$$
where $C'$ is the product of upper bounds of $|\ti b |$ and $|\ti c' |$. Using that
$p^p \leqslant e^p p!$ and  
$ (p+q ) ! \leqslant 2^{p+q} p! q!$, we get that
$$   \frac{ (\ell + m ) ^{2 ( \ell  + m ) }}{(\ell + m )! m !} \leqslant
\biggl( \frac{e}{2}\biggr)^{2 ( \ell + m )} \frac{ ( 2 ( \ell + m )) !}{( \ell + m ) ! m !}
\leqslant e^{2 ( \ell  + m ) } \frac{ ( \ell + m ) !}{ m !} \leqslant (2
e^2)^{\ell + m }\ell ! $$
so $| \tilde a _{\ell, \al , \be}  (x,x_c)| \leqslant C' (2\ell +1) (2 Ce^2)^{3
  \ell } \ell !$ and we conclude easily.
\end{proof}

\section{Symbol of the Bergman kernel} \label{sec:symb-bergm-kern}

The Bergman projector $\Pi_k$ of $L^k \otimes L'$ is the projector of $\Ci (M , L^k \otimes L' )$ onto the subspace $\Hilb_k = H^0 ( M, L^k \otimes L')$ consisting of holomorphic sections. The Bergman kernel $(x,y) \rightarrow \Pi_k(x,y)$ is the Schwartz kernel of $\Pi_k$, so
$$ \Pi_k
(x,y_c) = \sum_{i=1}^{d_k} \Psi_i (x) \otimes \con{\Psi}_i (y_c).$$
where $(\Psi_{i})_{i=1, \ldots, d_k} $ is any orthonormal basis of $H^0 ( M , L^k \otimes L')$.

It was proved in \cite{Ze} that the restriction to the diagonal of the Bergman kernel has an asymptotic expansion
\begin{gather} \label{eq:coef_bergman} 
  \Pi_k (x, \con x) = \biggl( \frac{k}{2 \pi} \biggr)^n \sum_{\ell =0 }^N k^{-\ell} \rho_\ell (x) + \bigo (k^{ n- N-1}), \qquad \forall N
\end{gather}
with smooth coefficients $\rho_{\ell} \in \Ci (M)$. 

\begin{thm} \label{th:estimate_bergman}
There exist an open neighborhood $W$ of $\Delta_M$ and a constant $C>0$ such that for
  any $\ell$, the function $\rho_\ell$ has a holomorphic extension $\wt
  \rho_{\ell}$ to $W$ satisfying
$$ | \wt {\rho}_{\ell} (x,y_c ) | \leqslant C^{\ell+1} \ell^\ell , \qquad
\forall (x,y_c) \in W.$$ 
\end{thm}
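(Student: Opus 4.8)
The plan is to exploit the fact that the symbol $\sum \hbar^\ell \rho_\ell$ of the Bergman kernel is the unit of the product $\star$ introduced in the discussion after Theorem~\ref{th:main_estimate}, and to bootstrap the growth estimate from the main estimate \eqref{eq:main_ineq} on the bidifferential operators $A_p$. Recall that $A_0(f,g) = fg$, and that the $\star$-product has the form $\sum \hbar^\ell f_\ell \star \sum \hbar^m g_m = \sum \hbar^{\ell+m+p} A_p(f_\ell, g_m)$ with $A_p(f,g) = \sum_{|\al|,|\be|\leqslant p} a_{p,\al,\be} (\partial_{\bar z}^\be f)(\partial_z^\al g)/(\al!\be!)$. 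The unit equation $\bigl(\sum \hbar^\ell \rho_\ell\bigr) \star 1 = 1$ gives $\rho_0 = 1$ and, for $\ell \geqslant 1$, a recursion expressing $\rho_\ell$ in terms of the $A_p$ applied to the earlier $\rho_{\ell'}$'s (with $\ell' < \ell$) and the constant function~$1$. Concretely, since $\star$ has $1$ as right unit and $A_0(f,1)=f$, the coefficient of $\hbar^\ell$ in $\bigl(\sum\rho\bigr)\star 1$ is $\rho_\ell + \sum_{p\geqslant 1} A_p(\rho_{\ell-p}, 1)$, which must vanish; hence
\begin{gather*}
\rho_\ell = - \sum_{p=1}^{\ell} A_p(\rho_{\ell-p}, 1), \qquad \ell \geqslant 1.
\end{gather*}

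The heart of the argument is then to run this recursion through the seminorm machinery of Lemma~\ref{lem:dertech}. Fix a compact polydisc neighborhood and work on a scale of balls $\Om_r$; I would prove by induction on $\ell$ that $\wt\rho_\ell \in \mathcal B(\Om_{r_0})$ for a fixed $r_0$ and satisfies a bound of the form $\|\wt\rho_\ell\|_{r} \leqslant M^{\ell+1}\ell^\ell/(r_0-r)^{\ell}$ (or the slightly cleaner variant $(M\ell)^\ell$ times a geometric factor) for all $0<r<r_0$. The inductive step uses that each map $f \mapsto A_p(f,1)$ is, by the explicit formula \eqref{eq:def_Pell}–\eqref{eq:B_extension} and the estimate \eqref{eq:main_ineq} on $\wt a_{p,\al,\be}$, a differential operator of order $\leqslant p$ in each variable whose coefficients are bounded on the compact set by $C^{p+1}$; combining this with assertion~1 of Lemma~\ref{lem:dertech} (Cauchy estimates for $\partial^\ga$) and summing the at most $p$-th order terms gives an operator norm bound $\|f \mapsto A_p(f,1)\|_{t,s} \leqslant (C'p)^p/(t-s)^p$ for small $s<t$, exactly of the form required in assertion~2 of Lemma~\ref{lem:dertech}. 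Feeding this into the recursion, one splits the interval $[r, r_0]$ into the $p$ nested subintervals on which the derivatives act and the remaining room where the inductive bound on $\wt\rho_{\ell-p}$ is read off, so that $\|A_p(\wt\rho_{\ell-p},1)\|_r$ is controlled by $(C'p)^p$ times the inductive bound on $\wt\rho_{\ell-p}$ evaluated at a slightly larger radius, divided by the appropriate power of the gap. Summing over $p=1,\dots,\ell$ and choosing $M$ large enough relative to $C'$ and $1/r_0$, the geometric series closes and reproduces the bound $\|\wt\rho_\ell\|_{r_0/2} \leqslant C^{\ell+1}\ell^\ell$, which is the claim with $W$ the polydisc of radius $r_0/2$ (patched over a finite cover of $\Delta_M$, using compactness of $M$).

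The main obstacle is the combinatorics of the radius-splitting in the recursion: one must organize the bookkeeping so that the product of a factor $(C'p)^p$ from $A_p$ with the inductive factor $(\ell-p)^{\ell-p}$ does not blow up past $\ell^\ell$ after summing over $p$, and so that the losses $(t-s)^{-p}$ incurred at each step telescope correctly across the whole chain of length $\ell$. The standard device — already implicit in the proof of Theorem~\ref{th:main_estimate} and in \cite{SjProp} — is to allot to the operator at level $p$ a piece of the total radius-gap proportional to its order $p$, so that after all $\ell$ steps the accumulated denominator is a single power $(r_0-r)^{\ell}$ rather than a product of many small gaps; the inequalities $p^p \leqslant e^p p!$ and $(p+q)! \leqslant 2^{p+q}p!\,q!$ used in the proof of Theorem~\ref{th:main_estimate} then absorb the factorials. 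A secondary, purely technical point is to check that all the $\wt\rho_\ell$ extend holomorphically to one common $W$; this is immediate from the recursion since $A_p$ preserves holomorphy on $\Om_{r_0}$, but it should be stated. Once these are in place the conclusion is routine, and in fact it shows that $\sum\hbar^\ell \rho_\ell$ is an analytic symbol in the sense of the introduction (up to the harmless difference between $\ell^\ell$ and $\ell!$, absorbed into $C$), which is what is needed for \eqref{eq:asexp_diagonal_bergman_an}.
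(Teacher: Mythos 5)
Your proposal is correct and follows essentially the same route as the paper: both start from the unit equation to get the recursion $\rho_\ell = -\sum_{p=1}^{\ell} A_p(\rho_{\ell-p},1)$, both reduce the problem to the operator bound $\|f \mapsto \wt A_p(f,1)\|_{t,s} \leqslant (C'p)^p/(t-s)^p$ via Theorem \ref{th:main_estimate} and assertion 1 of Lemma \ref{lem:dertech}, and both close the estimate by allotting the radius gap proportionally to the orders (assertion 2 of Lemma \ref{lem:dertech}). The only difference is bookkeeping: the paper unfolds the recursion into the explicit sum $\rho_m = \sum Q_{i_1}\cdots Q_{i_\ell}(1)$ over the $2^{m-1}$ compositions of $m$ and bounds each chain at once, whereas you run a direct induction on $\ell$ with the gap-dependent bound built into the hypothesis — both versions close for the same reason and yield the stated $C^{\ell+1}\ell^{\ell}$ bound.
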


As already mentioned in the introduction, this result was proved in {\cite{RoSjVu}}.
For the proof we will compute the functions  $\rho_{\ell}$ from the bidifferential operators $ A_{\ell}$ considered previously, by using that $(\Pi_k)$ is the unit of an algebra of Berezin-Toeplitz operators.

These operators will be defined as families $(T_k \in \op{End}( \Hilb_k), \; k \in \N)$  whose Schwartz kernel has a particular form. Here the Schwartz kernel is given by  
\begin{gather} \label{eq:schwartz_k}
T_k (x,y_c) = \sum_{i=1}^{ d_k} (T_k \Psi_i )(x) \otimes \con{\Psi}_i (y_c) , \qquad (x,y_c ) \in M \times \con{M} .
\end{gather}
where $(\Psi_i)$ is an orthonormal basis of $\Hilb_k$ as above. 
Conversely, we recover $T_k\Psi $ from its Schwartz kernel with the integral (\ref{eq:S_Kernel}).
\footnote{When $\Psi$ is smooth,  the right-hand side of (\ref{eq:S_Kernel})
  is equal to $T_k \Pi_k \Psi$. So we have implicitly identified the
  endomorphisms of $ \Hilb_k$ with the operators $T_k$ acting on $\Ci (M, L^k
  \otimes L')$ and satisfying  $\Pi_k T_k \Pi_k =T_k$.}  

Following \cite{oim}, 
we define a {\em Berezin-Toeplitz operator} as any family $( T_k)  \in \prod_k \op{End} \Hilb_k$  with Schwartz kernels of the form 
\begin{gather} \label{eq:1}
  T_k (x,y) = E_k (x,y) \sum_{\ell =0 }^N k^{-\ell}  \wt{\tau}_\ell (x,y) + \bigo (k^{ n- N-1}), \qquad \forall N
\end{gather}
where the sections $E_k$ are defined in (\ref{eq:defP_k}) on $W \subset M \times \con M$ and for any $\ell$, $\wt \tau_{\ell}$ is a smooth function of $M \times
\con M$ supported in $W$ such that $\con \partial \wt \tau_{\ell}$ vanishes to infinite
order along the diagonal. 
Denote by $\tau_{\ell}$ the restriction of $\wt \tau_{\ell}$ to the diagonal. We call the formal series $\sum \hbar^\ell \tau_\ell$ the symbol of $(T_k)$. Let $\mathcal{T}$ be the space of Berezin-Toeplitz operators and define the application
$$ \sigma:  \mathcal{T} \rightarrow \Ci(M)[[\hbar]], \qquad (T_k) \rightarrow \sum_{\ell=0}^\infty \hbar^{\ell} \tau_{\ell}.$$ 
We deduced in \cite{oim} from \cite{BoSj} that the Bergman kernel itself has the form (\ref{eq:1}). In other words, $(\op{id}_{\Hilb_k})$ belongs to $\mathcal{T}$. By (\ref{eq:coef_bergman}), its symbol if $\sum \hbar^\ell \rho_{\ell}$. 
We also proved in \cite{oim} that  $\mathcal{T}$ is closed under product and the map  $\si$ is onto with kernel the space of families $(T_k \in \op{End} \Hilb_k)$ such that $\| T_k \| = \bigo ( k^{-\infty})$. So $\mathcal{T}$ is a subalgebra of $\prod_k \Hilb_k$ and $\bigo ( k^{-\infty})$ being an ideal of this subalgebra, $\Ci (M) [[\hbar]]$ inherits a product $\star$. This product has been actually computed in Theorem \ref{th:local_expression}
\begin{gather} \label{eq:prod_star}
\sum_m \hbar^m f_m \star \sum_p \hbar^{p} g_{p} = \sum_{\ell,m,p} \hbar^{\ell+ m + p } A_\ell (f_m, g_p). 
\end{gather}
So $\sum \hbar ^{\ell} \rho_{\ell}$ is the unit of the associative algebra $(\Ci ( M) [[\hbar]], \star)$. 

By Theorem \ref{th:local_expression}, the $A_{\ell}$ have analytic
coefficients, so $\mathcal{C}^{\omega} (M)[[\hbar]]$ is a subalgebra of $(\Ci (M)[[\hbar]], \star)$, in particular the $\rho_{\ell}$'s are analytic. For an operator whose symbol has analytic coefficients, we can simplify slightly the formula (\ref{eq:1}) by choosing for $\wt{\tau}_\ell$ a function holomorphic on a neighborhood of the diagonal. This has the advantage that the right hand side of (\ref{eq:1}) is uniquely determined on a neighborhood of the diagonal by its restriction to the diagonal. Furthermore, these operators form a subalgebra of $\mathcal{T}$, so we could have considered only them. But in the theory developed in \cite{oim}, this was not possible because we worked with a smooth metric not necessarily analytic for the bundle $L$.  

\begin{lm}
  We have for each $m \geqslant 1$,
\begin{gather} \label{eq:2}
  \rho_{m} = \sum _{\substack{\ell \geqslant 1, \; (i_1, \ldots, i_{\ell}) \in \Z_{>0}^\ell,\\ i_1 + \ldots + i_\ell = m} } Q_{i_1} \ldots Q_{i_{\ell}} (1)
\end{gather}
  where for any $i \in \Z_{>0}$, $Q_i $ is the differential operator $Q_i (f) = - A_i (f,1)$.
\end{lm}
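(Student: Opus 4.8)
The plan is to exploit the fact that $(\op{id}_{\Hilb_k})$ is the unit of the associative algebra $(\Ci(M)[[\hbar]],\star)$, so that $\sum\hbar^\ell\rho_\ell$ is characterized by the equation $\sum\hbar^\ell\rho_\ell \star \mathbf{1}=\mathbf{1}$, where $\mathbf{1}$ denotes the constant function $1$ viewed as a formal series concentrated in degree $0$. Using the product formula \eqref{eq:prod_star}, I would expand $\bigl(\sum_m\hbar^m\rho_m\bigr)\star 1=\sum_{\ell,m}\hbar^{\ell+m}A_\ell(\rho_m,1)$ and collect the coefficient of $\hbar^N$ for $N\geqslant 1$. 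Since $A_0(f,1)=f\cdot 1=f$ (by the computation just after Theorem \ref{th:local_expression}, $A_0(f,g)=fg$) and $\rho_0=1$ (the $\hbar^0$-coefficient of the unit), this gives the recursion $0=\rho_N+\sum_{\ell\geqslant 1}A_\ell(\rho_{N-\ell},1)=\rho_N-\sum_{\ell\geqslant 1}Q_\ell(\rho_{N-\ell})$, i.e.\ $\rho_N=\sum_{\ell=1}^{N}Q_\ell(\rho_{N-\ell})$, with $Q_i(f)=-A_i(f,1)$ as in the statement.

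Next I would solve this recursion explicitly. Starting from $\rho_0=1$ and iterating $\rho_N=\sum_{i=1}^N Q_i(\rho_{N-i})$, every $\rho_m$ is obtained by applying a composition $Q_{i_1}\cdots Q_{i_\ell}$ to $\rho_0=1$, where the indices $i_1,\dots,i_\ell$ are positive integers. A clean way to make this precise is an induction on $m$: assume \eqref{eq:2} holds for all $m'<m$; then
\begin{gather*}
\rho_m=\sum_{i=1}^m Q_i(\rho_{m-i})=Q_m(1)+\sum_{i=1}^{m-1}Q_i\!\left(\sum_{\substack{\ell\geqslant 1,\ (j_1,\dots,j_\ell),\\ j_1+\dots+j_\ell=m-i}}Q_{j_1}\cdots Q_{j_\ell}(1)\right),
\end{gather*}
and reindexing $(i,j_1,\dots,j_\ell)\mapsto(i,j_1,\dots,j_\ell)$ as a tuple summing to $m$ of length $\ell+1\geqslant 1$ (the term $Q_m(1)$ being the length-one tuple) yields exactly the right-hand side of \eqref{eq:2}. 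The base case $m=1$ is immediate: $\rho_1=Q_1(\rho_0)=Q_1(1)$.

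The only subtle point to get right is the bookkeeping: one must check that the constant $1$ on which the operators act is genuinely $\rho_0$ and not something else, which follows because $\sum\hbar^\ell\rho_\ell$ is the \emph{unit}, forcing its degree-zero part to be $1$ (one sees this from $A_0(\rho_0,g)=\rho_0 g$ and comparing $\hbar^0$-coefficients in $\sum\hbar^\ell\rho_\ell\star\sum\hbar^m g_m=\sum\hbar^m g_m$). I do not expect any genuine obstacle here; the lemma is essentially a formal inversion of the series $\mathbf{1}+\sum_{i\geqslant 1}\hbar^i Q_i$ acting on the left, and the combinatorial step is the standard ``geometric series'' expansion $(\mathrm{id}-R)^{-1}=\sum_{\ell\geqslant 0}R^\ell$ with $R=\sum_{i\geqslant 1}\hbar^i Q_i$, read off degree by degree. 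The mild care needed is simply to track that each application of $A_i(\cdot,1)$ raises the $\hbar$-degree by exactly $i$, so that the composition $Q_{i_1}\cdots Q_{i_\ell}$ contributes to $\rho_m$ precisely when $i_1+\dots+i_\ell=m$.
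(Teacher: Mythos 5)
Your proposal is correct and follows essentially the same route as the paper: the author likewise uses that $\sum\hbar^\ell\rho_\ell$ is the unit of $\star$ to write $\sum\hbar^{\ell+m}A_\ell(\rho_m,1)=1$, extracts the recursion $\rho_m=Q_m(\rho_0)+\dots+Q_1(\rho_{m-1})$ from $A_0(f,g)=fg$, and solves it inductively to obtain the sum over compositions of $m$. No discrepancies.
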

\begin{proof}
  Since $\sum \hbar^\ell \rho_\ell$ is the unit of $\star$, we have $\sum \hbar^{\ell+m} A_\ell ( \rho_m , 1) = 1$. Using that $A_0 ( f,g) =fg$, we obtain
  $$ \rho_0 =1 , \qquad \rho_1 =  Q_1 ( \rho_0), \quad \rho_2 = Q_2 ( \rho_0) + Q_1 ( \rho_1)  $$
  and more generally $ \rho_{m} = Q_m (\rho_0) + Q_{m-1} ( \rho_1) + \ldots + Q_1 ( \rho_{m-1})$. This can be solved inductively by
  \begin{xalignat*}{2} 
     \rho_0 & = 1 , \qquad \rho_1 = Q_1 ( 1), \\ \rho_2 &  = ( Q_2(1) + Q_1(Q_1 (1))= (Q_2 + Q_1^2) ( 1) \\
    \rho_3 & = Q_3 (1) + Q_2 (Q_1(1)) + Q_1 ((Q_2 + Q_1^2) ( 1)) \\ & = ( Q_3 + Q_2Q_1+ Q_1Q_2 + Q_1^3 ) (1) 
  \end{xalignat*}
  and more generally we obtain (\ref{eq:2}).
\end{proof}

\begin{proof}[Proof of Theorem \ref{th:estimate_bergman}]
  Locally, we can define the holomorphic extension $\wt Q _\ell ( \tilde f ) = \wt A_{\ell} ( \tilde f,
1)$ of $Q_\ell (f)$ with $\wt A$ given in \eqref{eq:B_extension}. If in the right-hand side of Equation \eqref{eq:2}, we replace each
$Q_{i_j}$ by $\wt{Q}_{i_j}$, we obtain a holomorphic extension of $\rho_m$. 
We have 
$$  \wt{Q}_{\ell} ( \tilde{f} ) (x,y_c) = - \ell ! \sum_{|\al| \leqslant \ell }
\frac{1}{\al!}\tilde{a}_{\ell, \al, 0} (x,y_c) 
\partial_{y_c} ^\al \tilde f  (x,y_c). $$
Assume the coordinates are centered at $x_0$ and use them to identify a neighborhood of
$(x_0, \con x_0)$ with a neighborhood of the origin in $\C^{n} \times \C^n$.
Introduce the same seminorms $\| \cdot \|_{t,s}$ as before Lemma \ref{lem:dertech} . We have for
any $\ell \geqslant 1$ and $0<s<t$ sufficiently small
\begin{gather} \label{eq:step1}
  \| \wt Q_{\ell} \|_{t,s} \leqslant \frac{ (C'\ell)^{\ell}  }{(t-s)^\ell}
\end{gather}
for some $C'>0$. Indeed choose $C$ so that first part of Lemma
\ref{lem:dertech} holds and Theorem \ref{th:main_estimate} as well. So we have
  $$\frac{1}{\al!} \bigl\| \tilde{a}_{\ell, \al, 0} \partial_{y_c}^\al
  \bigr\|_{t,s} \leqslant \frac{C^{\ell+ | \al | + 1 }}{ ( t-s )^{|\al|}} .$$
  Assuming that $C\geqslant 1$ and $|\al| \leqslant \ell$, we have $(C/(t-s))^{|\al|}
  \leqslant ( C/(t-s))^{\ell}$. Furthermore the number of multiindices $\al
  \in \N^n$ with $| \al| \leqslant \ell$ is ${ \ell + n -1 \choose n-1 }$
  which is $\leqslant 2^{\ell + n -1}$. So the number of $\al$ with $|\al| \leqslant \ell$ is smaller that $2^{\ell + n }$, so 
  $$ \frac{1}{\ell!} \| \wt Q_{\ell} \|_{t,s} \leqslant \sum_{|\al | \leqslant \ell }
    \frac{C^{\ell+ | \al | + 1 }}{ ( t-s )^{|\al|}} \leqslant  2^{\ell + n }
    \frac{C^{2\ell+   1 }}{ ( t-s )^{\ell}}$$
  We obtain \eqref{eq:step1}  by using that $\ell ! \leqslant \ell^\ell$ and setting $C' = 2^{n+1}
  C^3$.

  Now \eqref{eq:step1} implies by assertion 2 of Lemma \ref{lem:dertech} that for any $(i_1, \ldots , i_{\ell}
) \in \Z_{>0}^\ell$
\begin{gather} \label{eq:unedeplus}
\| \wt Q_{i_1} \circ \ldots \circ \wt Q_{i_\ell} \|_{t,s} \leqslant \frac{(C'
  m )^m}{(t-s)^m}
\end{gather}
where $m = i_1+ \ldots  + i_\ell$.  
Recall that 
$$  \tilde \rho_{m} = \sum _{\substack{\ell \geqslant 1, \; (i_1, \ldots,
    i_{\ell}) \in \Z_{>0}^\ell,\\ i_1 + \ldots + i_\ell = m} } \wt Q_{i_1}
\ldots \wt Q_{i_{\ell}} (1). $$
Since the number of terms in the sum is $2^{m-1}$, we deduce from
\eqref{eq:unedeplus} with $t =2s$ and $s$ sufficiently small that
$$ \| \tilde \rho_m \|_s \leqslant 2^{m-1}  \frac{(C' m )^m}{s^m} ,$$
which concludes the proof.
\end{proof}

\section{Analytic Berezin-Toeplitz operators} \label{sec:analyt-berez-toepl}

\subsection{Analytic symbols}

An {\em analytic symbol} of $M$ is a formal series $\sum_{\ell=0}^\infty  \hbar^{\ell} f_\ell$ with coefficients in $\mathcal{C}^{\omega} (M)$ such that there exist a neighborhood $W$ of $\Delta_M$ in $M \times \con {M}$ and a constant $C>0$ so that each $f_{\ell}$ has a holomorphic extension $\tilde{f}_{\ell}$ to $W$ satisfying
\begin{gather} \label{eq:ineq_symb_an}
  |\tilde{f}_{\ell} (x) | \leqslant C^{\ell +1 } \ell !, \qquad \forall x \in W.
\end{gather}
Let $\mathcal{S}^{\omega}$ be the subspace of $\Ci (M) [[\hbar]]$ consisting of analytic symbols.
Recall the product $\star$ of $\Ci (M)[[\hbar]]$ introduced in (\ref{eq:prod_star}).

\begin{prop} \label{prop:an_symb_alg}
$\mathcal{S}^{\omega}$ is a subalgebra of $(\Ci (M)[[\hbar]], \star )$.
\end{prop}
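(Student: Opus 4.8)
\textbf{Proof plan for Proposition \ref{prop:an_symb_alg}.}

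The plan is to reduce everything to the main estimate \eqref{eq:main_ineq} of Theorem \ref{th:main_estimate} together with the local formula of Lemma \ref{lem:maj_ordre_derivee}. Fix two analytic symbols $\sum \hbar^m f_m$ and $\sum \hbar^p g_p$; by \eqref{eq:prod_star} the coefficient of $\hbar^N$ in their $\star$-product is $\sum_{\ell + m + p = N} A_\ell(f_m, g_p)$. Since the number of triples $(\ell, m, p)$ with $\ell + m + p = N$ is $\bigo(N^2)$, it suffices to bound each $A_\ell(f_m, g_p)$ with a holomorphic extension by something like $D^{N+1} N!$ on a fixed neighborhood of the diagonal, uniformly in the decomposition. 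The key point is then to control, for fixed $x$ (or $(x,y_c)$), the holomorphic extension $\ti A_\ell(\ti f_m, \ti g_p)$ using Lemma \ref{lem:maj_ordre_derivee}: it equals $\ell! \sum_{|\al|, |\be| \leqslant \ell} \ti a_{\ell, \al, \be} (\partial_{\con z}^\be \ti f_m)(\partial_z^\al \ti g_p)/(\al! \be!)$.

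The main work is therefore to estimate the derivatives $\partial^\al \ti g_p$, $\partial^\be \ti f_m$ on a slightly shrunk neighborhood. Here I would use the same seminorm technology: by the Cauchy inequalities (assertion 1 of Lemma \ref{lem:dertech}), if $|\ti f_m| \leqslant C^{m+1} m!$ on $W$, then on a smaller polydisc the derivative $\partial_{\con z}^\be \ti f_m$ is bounded by roughly $C^{m+1} m! \cdot C_0^{|\be|+1} \be! / r^{|\be|}$, and similarly for $\partial_z^\al \ti g_p$. Combining with $|\ti a_{\ell, \al, \be}| \leqslant C_1^{\ell+1}$ from Theorem \ref{th:main_estimate}, the $\al!$ and $\be!$ in the denominator cancel the $\al!, \be!$ coming from Cauchy, and the sum over $\al, \be$ with $|\al|, |\be| \leqslant \ell$ has at most $\bigo(2^{2\ell + 2n})$ terms; all constants of the form $(C_0/r)^{|\al|}$ are $\leqslant (C_0/r)^\ell$ since $|\al| \leqslant \ell$. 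One thus gets
\begin{gather*}
  |\ti A_\ell(\ti f_m, \ti g_p)| \leqslant \ell! \cdot C^{m+1} m! \cdot C^{p+1} p! \cdot C_1^{\ell+1} \cdot 2^{2\ell + 2n} (C_0/r)^{2\ell} C_0^2
\end{gather*}
on the shrunk neighborhood, i.e.\ a bound of the shape $\tilde C^{\ell + m + p + 1} \ell! \, m! \, p!$. Finally, since $\ell! \, m! \, p! \leqslant N!$ when $\ell + m + p = N$, summing over the $\bigo(N^2)$ decompositions gives a bound $D^{N+1} N!$ for the holomorphic extension of the $\hbar^N$-coefficient of $f \star g$, which is exactly condition \eqref{eq:ineq_symb_an}. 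This shows $\mathcal{S}^\omega$ is closed under $\star$; it is visibly a linear subspace, and it contains the unit $\sum \hbar^\ell \rho_\ell$ by Theorem \ref{th:estimate_bergman} (or directly, $\rho_0 = 1$), so it is a subalgebra.

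The main obstacle is the bookkeeping of constants: one must be careful that the neighborhood $W'$ on which the product symbol satisfies the estimate does not depend on $N$, and that the constant $C_1$ from Theorem \ref{th:main_estimate} (valid on a fixed compact $K \subset U \times \con U$) is used on a compact that is independent of $\ell$. Since Theorem \ref{th:main_estimate} is local in $U$, one also has to patch finitely many charts covering $M$, taking the worst constant; compactness of $M$ makes this harmless. A secondary subtlety is that $\partial_z^\al$ and $\partial_{\con z}^\be$ act in the $y_c$- and $x$-slots of the holomorphic extensions respectively, so one should apply the Cauchy estimates in the appropriate block of variables of $\Om_r$, but this is exactly the setting already used in the proof of Theorem \ref{th:estimate_bergman}.
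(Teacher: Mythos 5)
Your proposal is correct and follows essentially the same route as the paper: the paper also bounds the holomorphic extension $\wt A_{\ell}(\tilde f,\tilde g)$ on a shrunk neighborhood by $(C')^{\ell+1}\ell!\,|\tilde f|_{\wt U}|\tilde g|_{\wt U}$ using Cauchy's inequalities to absorb the $\al!\,\be!$, Theorem \ref{th:main_estimate} for the $\tilde a_{\ell,\al,\be}$, and the count of multi-indices with $|\al|,|\be|\leqslant\ell$, and then handles the triple sum exactly via $\ell!\,m!\,p!\leqslant N!$ and the polynomial number of decompositions (packaged there as the second part of Lemma \ref{lem:ledernier}). The only differences are cosmetic: you inline that combinatorial lemma and make the chart-patching explicit.
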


\begin{proof}
  By Theorem \ref{th:estimate_bergman}, the unit $\sum \hbar^\ell \rho_{\ell}$ of $\star$ is an analytic symbol. Let us prove that the product of two analytic symbols $\sum \hbar^\ell g_{\ell}$ and $\sum \hbar^\ell h_{\ell}$ is analytic. Recall the holomorphic extension $\wt A_{\ell}$ on $\wt U = U \times \con U \ni (x_0, \con x_0)$ defined in (\ref{eq:B_extension}). Restricting $U$ if necessary, each $f_{\ell}$, $g_{\ell}$ has an holomorphic extension to $\wt U$ and    
  we have to prove that
  $$\tilde{f}_{\ell} = \sum_{m+ p + q = \ell} \wt {A}_{m} (\tilde g_p, \tilde h_q )$$ satisfies (\ref{eq:ineq_symb_an}) on an open neighborhood $\wt V$ of $(x_0, \con{x}_0)$. This follows from the second part of Lemma \ref{lem:ledernier} and the fact that if $\wt V $ has a compact closure in $\wt U $, then $\wt A_{\ell}$ is continuous  $\mathcal{B} ( \wt U) \times  \mathcal{B} ( \wt U) \rightarrow \mathcal{B} ( \wt V )$ with a norm smaller than $ (C')^{\ell+1} \ell !$.
  Here $\mathcal{B} ( \wt V)$ is the space of bounded holomorphic function of $\wt V$ with the sup norm $| \cdot |_{\wt V}$.

 To prove the continuity and the norm estimate of $\wt A_{\ell}$, choose $C$  as in Theorem \ref{th:main_estimate}. Replacing $C$ by a larger constant, we have by Cauchy's inequality
  $ \frac{1}{\ga!} |\partial^{\ga} \tilde f |_{\wt V} \leqslant C^{|\ga|} |f|_{\wt U}$. So
  $$   \Bigl| \tilde{a}_{\ell, \al , \be} \frac{(\partial_{y_c}^{\be} \tilde{f})
    (\partial_{x}^{\al} \tilde{g})}{\al ! \be !} \Bigr|_{\wt V} \leqslant C^{3 \ell + 1} |\tilde f |_{\wt U} |\tilde g |_{\wt U}
  $$
for $|\al|$, $|\be| \leqslant \ell$.
The number of $\al$, $\be$ satisfying this condition being $\leqslant 4^{\ell +n -1}$, we obtain
$$ \bigl| \wt {A}_{\ell} ( \tilde f, \tilde g ) \bigr|_{\wt V} \leqslant \ell ! 4^{\ell +n -1} C^{3 \ell + 1} |\tilde f |_{\wt U} |\tilde g |_{\wt U} $$
which proves the claim with $C' = \max ( 4C^3 , 4^{n-1} C)$. 
  \end{proof}

\subsection{Berezin-Toeplitz operators}

\begin{dfn} \label{def:bertoepan}
An {\em analytic Berezin-Toeplitz operator} is a family $(T_k) \in  \prod_k \op{End}(\Hilb_k)$ whose Schwartz kernels satisfy
\begin{enumerate}
\item[{\em i.}] \label{item:dec_exp}
  for any compact subset $K$ of $M \times \con{M}$ not intersecting $\Delta_M$, there exists $C_K>0$ such that $|T_k (x,y_c) | \leqslant e^{-k/C_K} $ on $K$.
\item[{\em ii}.]
  on a neighborhood $W$ of the diagonal
\begin{gather} \label{eq:def_BT_analytic}
  T_k (x,y_c) = E_k (x,y_c) \sum_{\ell =0 } ^{ \lfloor  \ep k \rfloor
} k^{-\ell}  \tilde{f}_{\ell} (x,y_c)  + \bigo ( e^{-k/C'})
\end{gather}
  where the sections $E_k$ are defined in  (\ref{eq:defP_k}),  $\sum \hbar^{\ell} f_{\ell}$ is an analytic symbol, each
  $\tilde{f}_{\ell}$ is a holomorphic extension of $f_{\ell}$ to $W$
  satisfying (\ref{eq:ineq_symb_an}) for a constant $C$, $0 < \epsilon < 1/C$, $C'>0$  and the $\bigo$ is
  uniform on $W$. \qed 
\end{enumerate}
 \end{dfn}

Comparing to the introduction, the definition here is the characterization in theorem \ref{th:intro}. The fact that this definition is equivalent to the expansion (\ref{eq:def_an_ber_toep}) will be proved in Section \ref{sec:multiplicator}. 

Let $\mathcal{T}^\omega$ be the space of analytic Berezin-Toeplitz operators and $\si: \mathcal{T}^\omega \rightarrow \mathcal{S}^\omega$ be the map sending $(T_k)$ to the symbol $\sum \hbar^\ell f_\ell$.
Recall the algebra $\mathcal{T}$ of Berezin-Toeplitz operators and the symbol map $\si : \mathcal{T} \rightarrow \Ci (M) [[\hbar ]]$ defined in Section \ref{sec:symb-bergm-kern}.  
\begin{thm} \label{thm:BT_analytic}
  $ $
  
  \begin{enumerate}
  \item $\mathcal{T}^\omega$ is a subspace of $\mathcal{T}$ and $\si :
    \mathcal{T}^\omega \rightarrow \mathcal{S}^\omega$ is the restriction of
    $\si : \mathcal{T} \rightarrow \Ci (M) [[\hbar]]$. 
  \item $\si: \mathcal{T}^\omega \rightarrow \mathcal{S}^\omega$ is surjective
    and its kernel consists of the families $(T_k)  \in  \prod_k \op{End}(\Hilb_k)$ such that $\| T_k \| = \bigo ( e^{-k/C})$ for some $C>0$.
  \item  $\mathcal{T}^{\om} $ is closed under product, the corresponding
    product of symbols of $\mathcal{S}^\om$ is $\star$.
    \item 
 $(\op{id}_{\Hilb_k})$ belongs to $\mathcal{T}^{\omega}$, with symbol $\sum
 \hbar^{\ell} \rho_\ell$.
\end{enumerate}
\end{thm}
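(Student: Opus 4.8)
The plan is to establish the four assertions largely in the order stated, leveraging the smooth theory from \cite{oim} (which gives $\mathcal{T}$, $\si$, and the product $\star$) together with the analytic estimates of Section \ref{sec:main-estimates} and the properties of analytic symbols from Proposition \ref{prop:an_symb_alg} and the appendix lemmas on analytic symbols.

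For assertion 1, I would first observe that the expansion (\ref{eq:def_BT_analytic}) defining an analytic Berezin-Toeplitz operator immediately implies the smooth expansion (\ref{eq:1}): since the partial sum $\sum_{\ell\le \lfloor \ep k\rfloor} k^{-\ell}\tilde f_\ell$ agrees with $\sum_{\ell\le N} k^{-\ell}\tilde f_\ell$ up to $\bigo(k^{-N-1})$ once $k$ is large (the discarded tail is bounded by $\sum_{\ell>N} C^{\ell+1}\ell!\,k^{-\ell}$, which for fixed $N$ is $\bigo(k^{-N-1})$, controlling the finitely many intermediate terms is trivial), and since $e^{-k/C'} = \bigo(k^{-N-1})$ for all $N$. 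One must also check that condition i.\ (exponential decay off the diagonal) implies the $\bigo(k^{-\infty})$ decay required in the smooth definition, which is immediate. Hence $\mathcal{T}^\om \subset \mathcal{T}$, and the symbol $\sum\hbar^\ell f_\ell$ read off from (\ref{eq:def_BT_analytic}) is by definition the restriction to the diagonal of the $\tilde f_\ell$, matching the smooth symbol map; so $\si$ on $\mathcal{T}^\om$ is the restriction of $\si$ on $\mathcal{T}$.

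For assertion 4, I would invoke Theorem \ref{th:estimate_bergman}: it gives holomorphic extensions $\wt\rho_\ell$ on a fixed neighborhood $W$ with $|\wt\rho_\ell|\le C^{\ell+1}\ell^\ell\le C'^{\,\ell+1}\ell!$, so $\sum\hbar^\ell\rho_\ell$ is an analytic symbol, and $(\op{id}_{\Hilb_k})$ has Schwartz kernel $\Pi_k$. From \cite{oim}, $\Pi_k$ has the form (\ref{eq:1}) with these $\wt\rho_\ell$; the point is to upgrade this to the analytic expansion (\ref{eq:def_BT_analytic}) with an exponentially small remainder and exponentially small off-diagonal decay. This last upgrade is really the substance of \cite{RoSjVu}; in the present framework I expect it to be handled by the general mechanism of the next section (multipliers) or by a direct Laplace-method/analytic-stationary-phase argument analogous to the one sketched after Theorem \ref{th:local_expression} — this is the main obstacle, since it requires genuinely controlling the Bergman kernel itself and not just its formal symbol. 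For assertion 2, surjectivity follows by taking, for a given analytic symbol $\sum\hbar^\ell f_\ell$, the multiplier $T_k$ with kernel $\rho(x,y_c)E_k(x,y_c)\tilde f(\ep,k)(x,y_c)$ and then projecting $\Pi_k T_k \Pi_k$; the analytic estimates show the correction $\Pi_k T_k\Pi_k - T_k$ is $\bigo(e^{-k/C})$ using the off-diagonal decay of $\Pi_k$ from assertion 4 and the bound $|E|<1$ away from the diagonal. The kernel statement is an application of the appendix machinery: if $\si(T_k)=0$ then all $f_\ell$ vanish, so (\ref{eq:def_BT_analytic}) reads $T_k = \bigo(e^{-k/C})$ in kernel norm near the diagonal and $\bigo(e^{-k/C_K})$ off it, whence $\|T_k\|=\bigo(e^{-k/C})$ by a Schur-test estimate (here one uses compactness of $M$ and the Gaussian decay of $|E_k|$). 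Conversely such families are evidently analytic Berezin-Toeplitz operators with zero symbol. Finally, for assertion 3, closedness under product follows from the kernel computation: the Schwartz kernel of $S_k\circ T_k$ restricted near the diagonal is, by the analytic version of Theorem \ref{th:local_expression} (Laplace's method with exponentially small error, to be established in this section), again of the form $E_k\cdot\tilde\sigma(\ep,k) + \bigo(e^{-k/C})$ where $\sum\hbar^\ell\sigma_\ell = \si(S_k)\star\si(T_k)$, and this is an analytic symbol by Proposition \ref{prop:an_symb_alg}; off-diagonal exponential decay of the composed kernel follows from that of each factor together with the $|E|<1$ estimate. Thus the product of symbols is $\star$, exactly as in the smooth case. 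The one genuinely new analytic input throughout is the exponentially-small-error version of Laplace's method with holomorphic phase and amplitude depending on parameters, which I anticipate is the technical heart and likely relies on the seminorm techniques of Section \ref{sec:main-estimates} and the appendix.
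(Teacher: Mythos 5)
Your treatment of assertions 1 and the kernel statement in 2 is essentially the paper's: assertion 1 follows from Proposition \ref{prop:symbole_analytique} (the partial sums $\tilde f(\ep,k)$ admit the smooth asymptotic expansion), and the kernel of $\si$ is handled by a Schur-type estimate in one direction and coherent states in the other. But there are two genuine gaps. First, assertion 4 is left unproved: you call it ``the main obstacle'' and defer to \cite{RoSjVu} or to an unspecified direct analysis of the Bergman kernel. The paper's key point --- borrowed from \cite{Del} --- is that assertion 4 is a \emph{short corollary} of assertions 2 and 3 and requires no direct control of $\Pi_k$. By surjectivity there is $(T_k)\in\mathcal{T}^\om$ with symbol $\sum\hbar^\ell\rho_\ell$ (analytic by Theorem \ref{th:estimate_bergman}); one may take $T_k$ self-adjoint. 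Since $\sum\hbar^\ell\rho_\ell$ is the unit of $\star$, assertion 3 gives $T_k^2=T_k+\bigo(e^{-k/C})$, so the spectrum of $T_k$ concentrates in exponentially small neighborhoods of $0$ and $1$; the already-known smooth fact $T_k=\op{id}_{\Hilb_k}+\bigo(k^{-\infty})$ forces $\op{tr}T_k=\dim\Hilb_k+\bigo(k^{-1})$, hence all $\dim\Hilb_k$ eigenvalues lie near $1$ and $T_k=\op{id}_{\Hilb_k}+\bigo(e^{-k/C})$. Without this (or an equivalent) argument your proof of assertion 4 is missing its substance.

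Second, and relatedly, your surjectivity argument is circular in the paper's logical order: you project via $\Pi_k T_k\Pi_k$ and invoke ``the off-diagonal decay of $\Pi_k$ from assertion 4,'' but assertion 4 is precisely what is being derived from assertion 2. The paper instead observes that $\con\partial T_k=(\con\partial\rho)E_k\tilde f(\ep,k)$ is uniformly $\bigo(e^{-k/C'})$ (because $\con\partial\rho$ is supported where $|E|<1$), and then uses the Kodaira--Nakano--H\"ormander inequality to show that the $L^2$-projection of $T_k$ onto holomorphic sections of $(L^k\otimes L')\boxtimes(\con L^k\otimes\con L')$ differs from $T_k$ by $\bigo(e^{-k/C'})$ --- no knowledge of $\Pi_k$'s asymptotics is needed. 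For assertion 3 you correctly identify that the technical heart is an exponentially-small-error stationary phase; the paper supplies this via Sj\"ostrand's analytic stationary phase \cite{SjAs} together with an explicit contour deformation (Stokes' lemma) and then combines the resulting expansion with Lemma \ref{lem:ledernier} to resum the partial sums. Your sketch is compatible with this but does not constitute a proof of that step.
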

Since the Schwartz kernel of $\op{id}_{\Hilb_k}$ is the Bergman kernel $\Pi_k$,  the
last assertion is equivalent to the fact $|\Pi_k (x, y_c) |$  is a $\bigo (
e^{-k/C_K} )$ on any compact set $K$ of $M\times \con M$ not intersecting the
  diagonal and that
  $$  \Pi_k  (x,y_c) = E_k (x,y_c) \sum_{\ell =0 } ^{ E ( \ep k)
  } k^{-\ell}  \tilde{\rho}_{\ell} (x,y_c)  + \bigo ( e^{-k/C}) .$$

As already mentioned in the introduction, the third assertion has been proved in \cite{Del}, the last one in \cite{RoSjVu}. The proof of the third assertion relies on an analytic version of the stationary phase lemma \cite{SjAs}. The fact that $\mathcal T$ is closed under product was already proved in \cite{oim} by using a  stationary phase lemma for smooth complex valued phase. The last assertion can certainly be deduced from Theorem \ref{th:estimate_bergman} by
  using the method of \cite{BeBeSj}. Actually, it is a simple corollary of
 the third assertion as was noticed in \cite{Del}. We will reproduce this short proof here.

\subsection{Proof of Theorem \ref{thm:BT_analytic}} \label{sec:proof-theor-refthm:b}

By Proposition \ref{prop:symbole_analytique}, $ \tilde f(\ep, k ) :=  \sum_{\ell =0 } ^{ \lfloor \ep k \rfloor
} k^{-\ell}  \tilde{f}_{\ell}$ has an asymptotic expansion
$$ \tilde f (\ep , k )  = \sum_{\ell =0 } ^{ N
 } k^{-\ell}  \tilde{f}_{\ell} + \bigo ( k^{-N-1}), \qquad \forall N$$
This implies immediately that $\mathcal{T}^\omega \subset
\mathcal{T}$ and that the definitions of the symbols are the same for
Berezin-Toeplitz operators and analytic Berezin-Toeplitz operators. 
Another basic property of the partial sums $\tilde f ( \ep, k)$ is that for $\ep' < \ep$, $f(\ep' , k ) = f( \ep ,k) + \bigo ( e^{-k / C'})$ for some $C'>0$, cf. Proposition \ref{prop:symbole_analytique}. This simple  remark is actually needed to check that $\mathcal{T}^\omega$ is a linear
subspace of $\prod_k \op{End} \Hilb_k$.

To prove that $\si : \mathcal{T}^\omega \rightarrow \mathcal{S}^\omega$ is
surjective, we define for an analytic symbol $\sum \hbar^\ell f_{\ell}$ as
above the kernel
$$ T_k(x,y_c) := \rho (x,y_c) E_{k} (x,y_c) \ti f (\ep,k ) (x,y_c) $$
where $\rho \in \Ci_0 ( W)$ is equal to $1$ on a neighborhood of $\Delta_M$.
The family $(T_k)$ certainly satisfies \eqref{eq:def_BT_analytic} and also
$|T_k | = \bigo ( e^{-k/C_K})$ on any compact subset $K$ of $M \times \con M$
not intersecting $\Delta_M$. But $T_k$ is not the Schwartz kernel of an
endomorphism of $\Hilb_k$ because $T_k$ is not holomorphic. Actually, since $\con
\partial T_k = (\con \partial \rho ) E_k f(\ep,k)$, we have that $|\con
\partial T_k | = \bigo ( e^{-k/C'})$ uniformly on $M \times \con M$ for some
$C'>0$. By the Kodaira-Nakano-H\"ormander inequality, this has the consequence
that the orthogonal projection $S_k$ of $T_k$ onto the space of holomorphic
sections of $(L^k \otimes L') \boxtimes (\con{L}^k \otimes \con{L}')$
is equal to $T_k$ up to a uniform $\bigo ( e^{-k/C'})$ with a larger $C'$. So
the corresponding family of operators is an analytic Berezin-Toeplitz operator
with symbol $\sum \hbar^\ell f_\ell$. 

The kernel of $\si : \mathcal{T}^\omega \rightarrow \mathcal{S}^\omega$
consists of operators whose Schwartz kernel is uniformly in  $\bigo (
e^{-k/C})$ for some $C>0$. This condition is equivalent to the fact that the
operator norm $\| T_k \|$ is in   $\bigo (
e^{-k/C'})$ for some $C'>0$. To prove this, we can in one direction use the Schur
criterion and on the other direction that $$ T_k (x,\con y) = \langle
T_k \xi_{x,k},  
\xi_{y,k} \rangle, \qquad \| \xi_{x,k} \| , \| \xi_{y,k} \| = \bigo (
k^{n/2}) $$
where $\xi_{x,k}$, $\xi_{y,k}$ are the coherent states of
$L^k \otimes L'$ at $x$ and $y$ respectively.

We now prove that $\mathcal{T}^{\omega}$ is closed under product. First
observe that if $(T_k), (S_k) \in \prod_k \op{End} \Hilb_k $ satisfy both the
condition {\em i)} of Definition \ref{def:bertoepan} and their Schwartz kernels are uniformly bounded independently of $k$, then the
same holds for $(T_k S_k)$. Furthermore, for any open subsets $V$ and
$W$ of $M$  such that $V$ has a compact closure
contained in $W$, we have
$$ (T_k S_k ) (x,x_c) = \int_{W} T_k (x,\con{y} ) S_k (y, x_c) \; d\mu (y) +
\bigo ( e^{ - k /C}), $$
for any $(x,x_c) \in V \times \con V $
with a uniform $\bigo$.

Let us assume from now on that $(T_k)$ and $(S_k)$ are both analytic
Berezin-Toeplitz operators with symbols $\sum \hbar^\ell f_{\ell}$ and $\sum
\hbar^\ell g_{\ell}$. By Theorem \ref{th:local_expression} and Theorem \ref{thm:BT_analytic}, we
already know that the restriction to the diagonal of the Schwartz kernel of  $T_k \circ S_k$ has an asymptotic
expansion 
$$ (T_k \circ S_k)  (x,\con{x}) = \sum_{\ell =0 }^N k^{-\ell} h_{\ell} (x) + \bigo (
k^{-N-1}), \qquad \forall N$$
where $\sum \hbar^\ell h_{\ell}$ is the analytic symbol $\sum \hbar^\ell f_\ell
 \star  \sum \hbar^{\ell} g_{\ell}$. So we have to prove that $T_k \circ S_k = E_k
 \tilde{h} ( \ep, k ) + \bigo ( e^{-k/C})$ on a neighborhood of the diagonal.

Introduce the same local data $U \ni x_0$, $\varphi$, $\varphi'$  as in Section \ref{sec:product-formula}.
Let
\begin{gather*}
\wt{\psi} ( x, y_c, y, x_c) = - \wt \varphi ( x, y_c) -  \wt \varphi ( y ,
x_c) +  \wt \varphi ( x, x_c) + \wt \varphi ( y, y_c) \\
\wt {\psi}' ( x, y_c, y, x_c) = - \wt \varphi' ( x, y_c) -  \wt \varphi' ( y ,
x_c) +  \wt \varphi' ( x, x_c) + \wt \varphi' ( y, y_c)
\end{gather*}
If $\tilde f$, $ \tilde g$ are two holomorphic functions of $\wt U = U \times  \con U$, we set
$$ I_k (\tilde f , \tilde g)(x, x_c) = \biggl( \frac{2 \pi }{k} \biggr)^n \int_{B} e^{ -k \wt {\psi} (x, \con y, y,
  x_c) - \wt \psi ' (x, \con y, y, x_c)} \ti f (x,\con y) \ti g (y, x_c) d \mu (y) $$ 
where $B$ is a compact neighborhood of $x_0$ in $U$.

On a neighborhood of $(x_0, \con x_0)$, the Schwartz kernel of $T_k \circ S_k$ is equal to $I_k(\tilde f (\ep,k), \tilde{g} ( \ep, k ) ) E_k  + \bigo (e^{- k /C})$.  It will from Lemma \ref{lem:1} and Lemma \ref{lem:2} that  $I_k(\tilde f (\ep,k), \tilde{g} ( \ep, k ) ) = \tilde h ( \ep, k ) + \bigo ( e^{-k / C})$, which ends the proof that $(T_k \circ S_k)$ is an analytic Berezin-Toeplitz operator.

\begin{lm} \label{lem:1}
  There exist a neighborhood $V$ of $x_0$ in $U$ and $\ep>0$ such that
\begin{gather} \label{eq:expansion}
  I_k ( \tilde{f}, \tilde{g} ) (x,x_c) = \sum_{\ell = 0 } ^{\lfloor  \ep k \rfloor} k^{- \ell} \wt A_{\ell} (\tilde f, \tilde g)  (x,x_c) + \bigo ( e^{-\ep k } |\tilde{f}|_\infty | \tilde g |_\infty) 
\end{gather}
    for any $(x,x_c) \in V \times \con{V}$, with a uniform $\bigo$ and $ |\tilde f |_{\infty}$, $| \tilde g |_{\infty}$ the sup norms  of $\tilde f $ and $ \tilde g$ on $\wt  U$.  
\end{lm}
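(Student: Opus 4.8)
The plan is to treat $I_k(\tilde f,\tilde g)$ as an oscillatory integral with complex phase $-\wt\psi$ and apply a complex stationary phase argument, keeping careful track of the dependence on the truncation order. First I would analyze the phase $\wt\psi(x,\con y,y,x_c)$ viewed as a function of the integration variable $y$ near the critical point $y=x$ (which corresponds to $x_c=\con x$; more generally the complex critical point is $y$ close to $x$, depending holomorphically on $(x,x_c)$). The positivity of $L$ guarantees that $\operatorname{Re}\wt\psi$ has a strict minimum (equal to $0$) there, with nondegenerate Hessian given by the Kähler metric $(G_{ij})$, exactly as in the smooth computation behind Theorem \ref{th:local_expression}. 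Thus for $(x,x_c)$ in a small neighborhood $V\times\con V$ of $(x_0,\con x_0)$, there is a constant so that outside a ball of fixed radius around the critical point the integrand is $\bigo(e^{-k/C_0})$, and the integral localizes to a small ball.

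Next, on that small ball I would perform a holomorphic change of variables straightening the phase (a complex Morse lemma), reducing to a Gaussian integral, and then expand $\ti f\,\ti g\,(\det\cdot)$ times the subprincipal factor $e^{-\wt\psi'}\delta$ in a Taylor series in the fluctuation variable. Integrating term by term against the Gaussian produces precisely the coefficients $\wt A_\ell(\tilde f,\tilde g)$ of \eqref{eq:B_extension}: this is the same Laplace-method bookkeeping used in the proof of Theorem \ref{th:local_expression}, now carried out with a holomorphic dependence on $(x,x_c)$ rather than only on the diagonal. The nonstandard point is that I must truncate the Taylor expansion not at a fixed order $N$ but at order $\lfloor\ep k\rfloor$. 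To control the remainder of an order-$\lfloor\ep k\rfloor$ Taylor expansion inside a $k$-dependent Gaussian, I would use the standard analytic-symbol estimate: the $\ell$-th Taylor coefficient is bounded by $C^{\ell+1}\ell!$ by Cauchy's inequality on the fixed ball (here $\tilde f,\tilde g$ enter only through their sup norms $|\tilde f|_\infty$, $|\tilde g|_\infty$ on $\wt U$), and the Gaussian integral of the degree-$2\ell$ monomial against $e^{-k\sum G_{ij}u_i\con u_j}$ contributes a factor $\sim C^\ell \ell!/k^\ell$. Summing the tail from $\ell=\lfloor\ep k\rfloor+1$ to $\infty$ of terms of size $(C'/k)^\ell(\ell!)^2\cdot(\ell!)^{-1}\sim (C'\ell/k)^\ell$ gives a geometric-type series dominated by $e^{-\ep k}$ once $\ep$ is small enough relative to $C'$, which yields the claimed error $\bigo(e^{-\ep k}|\tilde f|_\infty|\tilde g|_\infty)$ after possibly shrinking $\ep$ and $V$.

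I expect the main obstacle to be making the complex stationary phase with the $\lfloor\ep k\rfloor$-order truncation fully rigorous and uniform in $(x,x_c)\in V\times\con V$: one must ensure the Morse-lemma change of variables is holomorphic and uniformly bounded, that the localization error $\bigo(e^{-k/C_0})$ and the Gaussian-tail error $\bigo(e^{-\ep k})$ combine with a single $\ep$, and that the remainder after truncation really sees only the sup norms of $\tilde f$ and $\tilde g$ (so that the estimate is linear in each, as needed for later applications to arbitrary analytic symbols rather than fixed functions). The combinatorial tail estimate itself is routine given the analytic-symbol inequality, and the identification of the coefficients with $\wt A_\ell$ is forced by comparison with Theorem \ref{th:local_expression}; the delicate part is the bookkeeping that keeps all constants and neighborhoods independent of $k$ and of the parameters $(x,x_c)$.
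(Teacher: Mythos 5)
Your plan is correct and follows essentially the same route as the paper: both rest on Sj\"ostrand's analytic stationary phase lemma (a contour deformation to reach the complex critical point $(y,y_c)=(x,x_c)$, a holomorphic Morse lemma, and a Laplace expansion truncated at order $\lfloor\ep k\rfloor$ with the tail controlled by Cauchy estimates), and both identify the coefficients with $\wt A_\ell$ by restriction to the anti-diagonal together with holomorphic dependence on $(x,x_c)$. The one point you treat a bit lightly --- that for $x_c\neq\con x$ the critical point lies off the integration contour, so a Stokes-type deformation of the contour (made explicit in the paper via the homotopy $u\mapsto(tx+u,\,tx_c+\con u)$) must precede the localization and Gaussian expansion --- is exactly the ``first step'' the paper spells out.
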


\begin{lm} \label{lem:2}
  $  \sum_{\ell = 0 } ^{\lfloor \ep k \rfloor } k^{- \ell} \wt A_{\ell} (\tilde f (\ep, k ) , \tilde g ( \ep, k ) ) = \tilde h ( \ep, k ) + \bigo ( e^{-k / C})$.
\end{lm}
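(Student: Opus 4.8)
The plan is to prove Lemma~\ref{lem:2}, which asserts that the partial sum $\sum_{\ell=0}^{\lfloor\ep k\rfloor} k^{-\ell}\wt A_\ell(\tilde f(\ep,k),\tilde g(\ep,k))$ equals $\tilde h(\ep,k)+\bigo(e^{-k/C})$. The key point is that this is a purely formal/algebraic statement about truncating products of analytic symbols: the double partial sum, when expanded via the bidifferential operators $A_\ell$ as in \eqref{eq:prod_star}, should agree with the partial sum of the $\star$-product $\sum\hbar^\ell h_\ell=\sum\hbar^\ell f_\ell\star\sum\hbar^\ell g_\ell$ up to all the ``tail'' terms of total order exceeding $\lfloor\ep k\rfloor$, and each such tail term must be shown to be exponentially small because it is built from analytic symbols.

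First I would write out explicitly the left-hand side: substituting $\tilde f(\ep,k)=\sum_{p\le\lfloor\ep k\rfloor}k^{-p}\tilde f_p$ and $\tilde g(\ep,k)=\sum_{q\le\lfloor\ep k\rfloor}k^{-q}\tilde g_q$, and using the bilinearity of each $\wt A_\ell$, the left-hand side becomes $\sum_{\ell,p,q}k^{-\ell-p-q}\wt A_\ell(\tilde f_p,\tilde g_q)$ where the sum is over $\ell\le\lfloor\ep k\rfloor$, $p\le\lfloor\ep k\rfloor$, $q\le\lfloor\ep k\rfloor$. On the other hand, $\tilde h(\ep,k)=\sum_{n\le\lfloor\ep k\rfloor}k^{-n}\tilde h_n=\sum_{n\le\lfloor\ep k\rfloor}k^{-n}\sum_{\ell+p+q=n}\wt A_\ell(\tilde f_p,\tilde g_q)$. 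The difference between the two is therefore a sum of terms $k^{-\ell-p-q}\wt A_\ell(\tilde f_p,\tilde g_q)$ over index triples that appear in exactly one of the two sums; by inspection, every such triple satisfies $\ell+p+q>\lfloor\ep k\rfloor$ (either because the total order exceeds the cutoff, or because one of the indices exceeds the cutoff while the total would otherwise be admissible --- in fact if $\ell+p+q\le\lfloor\ep k\rfloor$ then automatically each of $\ell,p,q\le\lfloor\ep k\rfloor$, so the only terms in the symmetric difference are those with $\ell+p+q>\lfloor\ep k\rfloor$). So it suffices to bound $\sum_{\ell+p+q>\lfloor\ep k\rfloor}k^{-\ell-p-q}|\wt A_\ell(\tilde f_p,\tilde g_q)|$ by $\bigo(e^{-k/C})$ on a neighborhood of $(x_0,\con x_0)$.

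Next I would invoke the estimates already established: by Theorem~\ref{th:main_estimate} (or the norm bound on $\wt A_\ell$ proved inside Proposition~\ref{prop:an_symb_alg}), on a suitable compact neighborhood $\wt V$ one has $|\wt A_\ell(\tilde f_p,\tilde g_q)|_{\wt V}\le C_0^{\ell+1}\ell!\,|\tilde f_p|_{\wt U}|\tilde g_q|_{\wt U}$, and since $\sum\hbar^\ell f_\ell$, $\sum\hbar^\ell g_\ell$ are analytic symbols we have $|\tilde f_p|_{\wt U}\le C_1^{p+1}p!$ and $|\tilde g_q|_{\wt U}\le C_1^{q+1}q!$. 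Combining and using $\ell!\,p!\,q!\le(\ell+p+q)!$, each term is bounded by $k^{-n}C_2^{n+1}n!$ where $n=\ell+p+q$. The number of triples $(\ell,p,q)$ with $\ell+p+q=n$ is $\binom{n+2}{2}\le(n+1)^2$, so the tail sum is at most $\sum_{n>\lfloor\ep k\rfloor}(n+1)^2 C_2^{n+1}n!\,k^{-n}$. Using $n!\le n^n$ and the standard fact that, for $\ep<1/(eC_2)$ say, the function $n\mapsto (C_2 n/k)^n$ is decreasing for $n$ in the relevant range and geometrically small at $n=\lfloor\ep k\rfloor$ (indeed $(C_2 n/k)^n\le(C_2\ep)^n e^{\,\cdot\,}$ gives a bound like $e^{-k/C}$ once $C_2\ep<1$, summing a convergent geometric-type series), we get the desired $\bigo(e^{-k/C})$. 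This is precisely the kind of estimate packaged in Proposition~\ref{prop:symbole_analytique} / Lemma~\ref{lem:ledernier} of the appendix about remainders of analytic symbols, which I would cite to handle the summation cleanly rather than redoing it.

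The main obstacle is the bookkeeping in the second step: one must be careful that the symmetric difference of the two index sets contains \emph{only} triples with $\ell+p+q>\lfloor\ep k\rfloor$ and no spurious low-order terms, and that the single constant $C$ in the $\star$-product and the analyticity constants can all be absorbed into a choice of $\ep$ small enough (depending on $C_0,C_1$) so that the geometric-type series converges and produces a genuine exponential decay uniform in $(x,x_c)\in V\times\con V$. Once $\ep$ is fixed appropriately --- note $\ep$ here may need to be shrunk relative to the $\ep$ coming from Lemma~\ref{lem:1}, which is harmless since the two lemmas are combined and a smaller $\ep$ only changes the remainder by another $\bigo(e^{-k/C'})$ as noted after Definition~\ref{def:bertoepan} --- the estimate is routine. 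I expect the proof to be short, essentially a one-paragraph application of the analytic-symbol tail estimates from the appendix to the explicit product formula \eqref{eq:prod_star}.
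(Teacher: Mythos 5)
Your proposal is correct and follows essentially the same route as the paper: the paper's proof of this lemma is precisely the one-line observation that it is the second part of Lemma \ref{lem:ledernier} applied to the bilinear maps $\wt A_{\ell}$, whose norm bounds $\|\wt A_\ell\|\leqslant (C')^{\ell+1}\ell!$ as maps $\mathcal{B}(\wt U)\times\mathcal{B}(\wt U)\to\mathcal{B}(\wt V)$ were established in the proof of Proposition \ref{prop:an_symb_alg}. Your explicit unwinding of the index bookkeeping and the tail estimate reproduces, correctly, the content of the proof of Lemma \ref{lem:ledernier} itself.
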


\begin{proof}[Proof of Lemma \ref{lem:1}]
This is a consequence of the analytic version of stationary phase lemma proved in \cite{SjAs}.
When $x_c = \con{x}$, the phase $y \rightarrow \wt \psi ( x, \con{y}, y,
\con{x})$ has a critical point at $y = x$ as was used in Section \ref{sec:product-formula} to establish Theorem \ref{th:local_expression}. In the case where
$x_c \neq \con{x}$, we consider the holomorphic extension $(y,
y_c) \rightarrow \wt \psi ( x, y_c, y, x_c)$. As it was already noticed in
\cite{oim}, the critical point is now at $y=x$, $y_c = x_c$. Indeed, we compute
easily from the definition of $\wt{\psi}$ that 
\begin{gather} \label{eq:Tayl_exp}
\wt \psi ( x, x_c + \con{u}, x+u, x_c) = \sum_{i,j} \frac{\partial^2
  \wt{\varphi}}{\partial x_i \partial x_{c,j}} (x,x_c) u_i \con{u}_j + \bigo ( |u|^3). 
\end{gather}
Furthermore, $(\partial^2
  \wt{\varphi}/\partial x_i \partial x_{c,j}) (x,\con{x}) = G_{i,j} (x)$ is
  definite positive. In this situation, we can apply \cite[Theorem 2.8]{SjAs}.
  The first step is a deformation which can be made explicit in
  our case. We identify $U$ with an open set of $\C^n$, $x_0$ being sent to the origin. We denote by $B_r$ the closed ball with radius $r$ of $\C^n$. We choose an open neighborhood $\wt V$ of the origin in $U \times \con{U} = \wt U$ and $r>0$ such that 
  \begin{enumerate}
   \item $(x,x_c) \in \wt V$, $u \in B_r$,
and $t \in [0,1]$ implies that $(tx +u, tx_c + \con {u}) \in \wt U $.
  \item   for any $(x,x_c) \in \wt V$,  $\op{Re} \wt \psi (x,x_c, 1, u) >0$ if $u \in B_r \setminus
    \{ 0 \}$,
    \item for any $(x,x_c) \in \wt V$,  $\op{Re} \wt \psi (x,x_c, t, u) >0$ if $u
      \in \partial B_r $ and $t \in [0,1]$. 
  \end{enumerate}
  
    Then, by this last condition and Stokes Lemma,  for any
  holomorphic $b: \wt U \rightarrow \C$, the integral  
  $$ J_k(t, b)(x,x_c) := \int_{B_r} e^{ - k \wt \psi (x, t x_c +\con{u} ,t x+  u, x_c)
  } \tilde{b}(tx + u,t x_c+ \con{u})\;
  d\mu_L (u),$$
is independent of $t \in [0,1]$ up to a term in $\bigo ( e^{-k/C} \sup_{\wt U}
|b|)$ uniformly with respect to $(x,x_c) \in \wt V$.
The second step  is to prove that  
$$ J_k (1,b) (x,x_c) = \sum_{\ell = 0 }^{E(\ep k)}  k^{-\ell} c_{\ell} (x,x_c)
+ \bigo ( e^{-k/C} \sup_{\wt U} |b|) $$
with a $\bigo$ uniform for $(x,x_c) \in \wt V$. 
This follows from a holomorphic version of Morse Lemma and a careful
application of Laplace method. The  assumption for this second step is the condition 2 above and the fact that the Hessian $\partial^2
  \wt{\varphi} / \partial x_i \partial x_{c,j}  $ in (\ref{eq:Tayl_exp}) has a positive definite real part.  
 This concludes the proof except for the computation of the coefficients in (\ref{eq:expansion}). Actually we already know these coefficients for $x_c = \con x$, and by \cite[Remarque 2.10]{SjAs}, they depend holomorphically on $(x,x_c)$.
\end{proof}
\begin{proof}[Proof of Lemma \ref{lem:2}] This follows from the second part of Lemma \ref{lem:ledernier} and the continuity of the $\wt A_{\ell}$ established in the proof of Proposition \ref{prop:an_symb_alg}.
\end{proof}

We now prove the fourth assertion of Theorem \ref{thm:BT_analytic} by following \cite{Del}. Choose $(T_k)$ in $\mathcal{T}^\omega$ with symbol $\sum \hbar^\ell b_{\ell}$. Replacing $T_k$ by $\frac{1}{2} (T_k + T_k^*)$, we can assume that $T_k$ is self-adjoint. Since $(T_k)$ belongs to $\mathcal{T}$ and has the same symbol of $(\op{id}_{\Hilb_k} )$, we already know that $T_k = \id_{\Hilb_k} + \bigo ( k^{-\infty})$. Our goal is to prove that $T_k = \op{id}_{\Hilb_k} + \bigo ( e^{-k/C} )$. The symbol of $(T_k)$ is idempotent, so $T_k^2 = T_k + \bigo ( e^{-k/C})$, so the spectrum of $T_k$ splits in two parts, concentrating at $0$ and $1$ respectively, more precisely  
$$ \op{sp} (T_k) \subset [-Ce^{-k/C}, Ce^{-k/C}] \cup [1 -Ce^{-k/C} , 1+ Ce^{-k/C} ]$$
with a larger $C$. If $k$ is sufficiently large, these two intervals are
disjoint. Let $n_k$ be the number of eigenvalues counted with multiplicity in
the second interval. Then $\op{tr} T_k = n_k + \bigo ( e^{-k/C})$. The fact
that $T_k = \id_{\Hilb_k} + \bigo ( k^{-\infty})$ implies that
$$\op{tr} T_k  = \dim \Hilb_k + \bigo (
k^{-1}),$$
so $n_k = \dim \Hilb_k$ when $k$ is sufficiently large, so  $T_k  = \op{id}_{\Hilb_k} + \bigo (e^{-k/C})$ as was to be proved.

\section{Multiplier} \label{sec:multiplicator}

For $f \in \Ci (M)$, we define the operator $T_k (f) : \Hilb_k \rightarrow \Hilb_k$ sending $\psi$ into $\Pi_k ( f \psi_k)$.
By \cite{oim}, any Berezin-Toeplitz operator $(S_k)$ has an expansion of the form 
\begin{gather} \label{eq:3} 
S_k = \sum_{\ell = 0}^{N} k^{-\ell} T_k (f_{\ell}) + \bigo(k^{-N-1}), \qquad \forall N.
\end{gather}
where $(f_\ell)$ is a sequence of $\Ci (M)$, and the $\bigo$ is in uniform norm. Conversely, for any sequence $(f_{\ell})$, there exists a Toeplitz operator $S_k$ satisfying (\ref{eq:3}). Furthermore,
$$ \si (S_k) = \sum_{\ell, m } \hbar^{\ell+m} B_{\ell} (f_m)$$
where the $B_{\ell}$ are differential operators of $\Ci ( M)$, $B_0$ being the identity. The map sending $\sum \hbar^{\ell} f_{\ell}$ into the symbol of $(S_k)$ is an isomorphism of $\Ci (M) [[\hbar]]$. 

\begin{thm} \label{th:multiplier}
  $ $
  \begin{enumerate}
    \item 
      For any analytic symbol $\sum \hbar^{\ell} f_{\ell}$ and $\ep >0$ sufficiently small, the family  $ \bigl( T_k (\sum_{\ell =0 }^{\lfloor \ep k \rfloor } k^{-\ell} f_{\ell} ) \bigr) $ is an analytic Berezin-Toeplitz operator.
    \item  For any $f \in \mathcal{C}^{\omega} (M)$, $B_{\ell} (f)$ has a holomorphic extension $\wt B_{\ell} ( \tilde f )$ given locally by
      \begin{xalignat}{2}
        \begin{split} \label{eq:4}  
  &  \wt B_{\ell} ( \tilde f ) (x,x_c) =  \\
  & \sum_{\substack{ p+q+r = \ell \\ |\al | \leqslant p, |\be | \leqslant p}} \frac{   \tilde a_{p, \al , \be } ( x,x_c) }{\al! \be ! (p!)^{-1}}  \partial^\al_y \partial^\be_{y_c}  \bigl( \tilde{\rho}_{q} (x, y_c) f (y, y_c) \tilde{\rho}_{r} (y, x_c)\bigr)  \Bigr|_{\substack{y= x,\\ y_c = x_c}}  
        \end{split}
\end{xalignat}
  where the $a_{p, \al, \be}$'s are the functions introduced in Lemma \ref{lem:maj_ordre_derivee}.
\item  The map sending $ \sum \hbar^m f_m$ into $\sum \hbar^{\ell + m } B_{\ell} (f_m)$ is an isomorphism of $\mathcal{S}^\omega$. Consequently, any analytic Berezin-Toeplitz operator $(S_k)$ has the form
  $$ S_k = T_k \biggl(\sum_{\ell =0 }^{\lfloor \ep k \rfloor } k^{-\ell} f_{\ell} \biggr) + \bigo ( e^{-k/C}) $$
  for an analytic symbol $\sum \hbar^{\ell} f_{\ell}$. 
  \end{enumerate}
\end{thm}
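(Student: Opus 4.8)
The plan is to treat the three assertions in order, with the bulk of the work going into the local formula \eqref{eq:4} and the resulting estimate on the $\wt B_\ell$. For the second assertion, I would start from the known characterization of the Schwartz kernel of $T_k(f)$: the kernel of $T_k(f)$ is $\Pi_k \circ (\text{multiplication by } f) \circ \Pi_k$, so its Schwartz kernel near the diagonal is obtained by the same Laplace-method computation that produced Theorem \ref{th:local_expression}, but now with the Bergman kernel inserted on both sides. Concretely, writing $\Pi_k(x,y_c) = E_k(x,y_c)\bigl(\sum_q k^{-q}\wt\rho_q(x,y_c) + O(k^{-\infty})\bigr)$, the kernel of $T_k(f)$ at $(x,x_c)$ is
$$
\biggl(\frac{k}{2\pi}\biggr)^{2n}\int_M E_k(x,\con y)\,\Bigl(\sum_q k^{-q}\wt\rho_q(x,\con y)\Bigr)\, f(y)\, E_k(y,x_c)\Bigl(\sum_r k^{-r}\wt\rho_r(y,x_c)\Bigr)\,d\mu(y) + O(k^{-\infty}),
$$
and after factoring out $E_k(x,x_c)$ and rescaling $y = x+u$ exactly as in the proof of Theorem \ref{th:local_expression}, Laplace's method produces the $A_p$-type bidifferential operators applied to the product $\wt\rho_q(x,y_c) f(y,y_c)\wt\rho_r(y,x_c)$, grouped by $p+q+r=\ell$. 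This yields \eqref{eq:4} with exactly the coefficients $\tilde a_{p,\al,\be}$ of Lemma \ref{lem:maj_ordre_derivee}. I would present this computation briefly, citing the same reference \cite{Horm} for Laplace's method and \cite{oim} for the fact that the symbol of $(T_k(f))$ has this form.

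For the first assertion, once \eqref{eq:4} is in hand I would argue as in Section \ref{sec:proof-theor-refthm:b}: define the approximate kernel $\rho(x,y_c) E_k(x,y_c)\,\ti f(\ep,k)(x,y_c)$, check that it is exponentially close to a holomorphic kernel via the Kodaira–Nakano–Hörmander estimate, and identify the resulting operator with $T_k(\sum_{\ell\le\ep k}k^{-\ell}f_\ell)$ up to $O(e^{-k/C})$, using Proposition \ref{prop:symbole_analytique} to control the tail of the partial sum. Alternatively — and more cheaply — one notes that $(T_k(f(\ep,k)))$ is a Berezin-Toeplitz operator whose symbol is the analytic symbol $\sum\hbar^{\ell+m}B_\ell(f_m)$ (granting the estimate on $\wt B_\ell$ proved for assertion 3), hence it satisfies condition \emph{ii} of Definition \ref{def:bertoepan}; condition \emph{i} follows because off the diagonal the integrand above is exponentially small, since $|E(x,\con y)|<1$ and $|E(y,x_c)|<1$ there. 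I expect this to be routine given the machinery already developed.

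The heart of the matter, and the main obstacle, is the third assertion: that $\sum\hbar^m f_m \mapsto \sum\hbar^{\ell+m}B_\ell(f_m)$ preserves $\mathcal S^\om$ and is invertible on it. The strategy is the one already used twice in the paper — the seminorms $\|\cdot\|_{t,s}$ of Lemma \ref{lem:dertech}. From \eqref{eq:4}, $\wt B_\ell$ is a finite sum over $p+q+r=\ell$ of compositions of: multiplication by $\wt\rho_q$ (bounded by $C^{q+1}q!$ by Theorem \ref{th:estimate_bergman}, absorbed into the seminorm at a slightly smaller radius), multiplication by $\wt\rho_r$, and the differential operator $\sum_{|\al|,|\be|\le p}\frac{p!}{\al!\be!}\tilde a_{p,\al,\be}\partial_y^\al\partial_{y_c}^\be$, whose $\|\cdot\|_{t,s}$-norm is bounded by $(C'p)^p/(t-s)^p$ by exactly the argument giving \eqref{eq:step1} (Theorem \ref{th:main_estimate} plus assertion 1 of Lemma \ref{lem:dertech}, using $p!\le p^p$ and a crude count of multi-indices). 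Summing over the $O(\ell^2)$ decompositions $p+q+r=\ell$ gives $\|\wt B_\ell\|_{t,s}\le (C''\ell)^\ell/(t-s)^\ell$ — note the $q!\,r!$ from the $\rho$'s recombine with $p^p$ to give only $\ell^\ell$ growth, using $p!q!r!\le \ell!\le\ell^\ell$ — which is precisely the estimate that, via the iterated-composition bound of assertion 2 of Lemma \ref{lem:dertech} and the inversion argument of the appendix (Lemma \ref{lem:ledernier}, following \cite{SjProp}), shows both that $\sum\hbar^{\ell+m}B_\ell(f_m)$ is analytic when $\sum\hbar^mf_m$ is, and that the formal inverse (which exists since $B_0=\id$) satisfies the same type of bound. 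The last sentence of the theorem then follows: given $(S_k)\in\mathcal T^\om$ with symbol $\sum\hbar^\ell\tau_\ell$, apply the inverse map to get an analytic symbol $\sum\hbar^\ell f_\ell$, form $T_k(f(\ep,k))$; by assertion 1 this is in $\mathcal T^\om$ with the same symbol, so it agrees with $S_k$ up to $O(e^{-k/C})$ by assertion 2 of Theorem \ref{thm:BT_analytic}. The delicate point throughout is bookkeeping the three separate sources of factorials ($q!$, $r!$ from the Bergman coefficients and $p^p$ from the bidifferential operators) so that their product is controlled by $\ell^\ell$ and not something worse; the seminorm formalism is exactly what makes this transparent, avoiding any direct use of the Leibniz formula on the product $\wt\rho_q f\wt\rho_r$.
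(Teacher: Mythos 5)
Your treatment of assertions 2 and 3 is essentially the paper's proof: the formula \eqref{eq:4} is obtained exactly as you describe, by redoing the computation of Theorem \ref{th:local_expression} for $\Pi_k f \Pi_k$ with $d$ replaced by $\sum_{q,r}\hbar^{q+r}\tilde\rho_q(x,y_c)f(y,y_c)\tilde\rho_r(y,x_c)$, and the third assertion is proved by establishing $\|\wt B_\ell\|_{t,s}\leqslant (C\ell)^\ell/(t-s)^\ell$ with the seminorms of Lemma \ref{lem:dertech} and inverting via the same $2^{\ell-1}$-term composition formula as in Theorem \ref{th:estimate_bergman}; your bookkeeping of the factorials ($p^pq^qr^r\leqslant\ell^\ell$) is the right one.

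However, both arguments you offer for assertion 1 have a gap. The first one identifies the holomorphic projection of the kernel $\rho\, E_k\, \tilde f(\ep,k)$ with $T_k(f(\ep,k))$ up to $\bigo(e^{-k/C})$; this is false, because that projection is the analytic Berezin--Toeplitz operator with \emph{covariant} symbol $\sum\hbar^\ell f_\ell$ (this is the surjectivity argument of Theorem \ref{thm:BT_analytic}), whereas the covariant symbol of $T_k(f(\ep,k))$ is $\sum\hbar^{\ell+m}B_\ell(f_m)$, which differs already at order $k^{-1}$ since $B\neq\id$. The ``cheaper'' alternative is also invalid: knowing that $(T_k(f(\ep,k)))$ is a smooth Berezin--Toeplitz operator whose symbol happens to be an analytic symbol does not yield condition \emph{ii} of Definition \ref{def:bertoepan} --- that condition demands an exponentially small remainder in the kernel expansion, and a perturbation of size $k^{-\log k}$, say, leaves the symbol unchanged while destroying analyticity. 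The correct route, which is the one the paper takes, is that assertion 1 comes out of the same computation as assertion 2 provided you run it with exponential accuracy: insert $\Pi_k=E_k\tilde\rho(\ep,k)+\bigo(e^{-k/C})$ (assertion 4 of Theorem \ref{thm:BT_analytic}) on both sides of the integral for $\Pi_k f(\ep,k)\Pi_k$ and apply the analytic stationary phase machinery of Section \ref{sec:proof-theor-refthm:b} (the analogues of Lemmas \ref{lem:1} and \ref{lem:2}), not merely Laplace's method with $\bigo(k^{-\infty})$ remainders as in your sketch of assertion 2. This simultaneously gives the off-diagonal exponential decay and the on-diagonal expansion with symbol $B(f)$ and error $\bigo(e^{-k/C})$.
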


The proof is long, but actually a variation of what we did before. 
\begin{proof}
  1 and 2. We compute the Schwartz kernel of the product
  $$ (\Pi_k f \Pi_k) (x,x_c) = \int_M \Pi_k (x , \con y ) f(y) \Pi_k ( y, x_c) \; d\mu (y) $$
  exactly as we did for the product of two Toeplitz operators in Section \ref{sec:proof-theor-refthm:b}. The only change is the factor $f$. The computation of the symbol is the same as in Theorem \ref{th:local_expression}, where we replace $d$ by the series $\sum_{q,r} \hbar^{q+r} \tilde{\rho}_q ( x, \con x + \con u) f( x+u, \con x + \con u ) \tilde{\rho}_r (x+u, \con x)$. This leads to the formula of $\wt B_{\ell} (f)$. The estimates of $a_{\ell, \al, \be}$ given in Theorem \ref{th:main_estimate} and the fact that $\sum \hbar^\ell \rho_{\ell}$ is analytic, Theorem \ref{th:estimate_bergman}, imply that the new symbol $\sum \hbar^{\ell + m } B_\ell ( f_m)$ is analytic when $\sum \hbar^m f_m$ is.

  3. We already know that $B= \sum \hbar^{\ell }B_{\ell}$ sends $\mathcal{S}^{\omega}$ into itself. We have to prove that the same holds for $B^{-1}$. It is as difficult as proving that $\sum \hbar^{\ell} \rho_{\ell}$ is analytic. Fortunately, we can follow the same method. First, we deduce from Theorem \ref{th:main_estimate} and Theorem \ref{th:estimate_bergman} that
  $$ \| \wt {B}_{\ell} \|_{t,s} \leqslant \frac{ (C \ell)^\ell}{ (t-s)^{\ell}} .$$
  The proof is the same as the one of (\ref{eq:step1}) except that we now use Formula (\ref{eq:4}). We deduce that the inverse $B^{-1} = \sum \hbar^\ell C_{\ell}$ satisfies
  $$ \| \wt {C}_{\ell} \|_{t,s} \leqslant 2^{\ell-1} \frac{ (C \ell)^\ell}{ (t-s)^{\ell}} $$
  with exactly the same proof as in Theorem \ref{th:estimate_bergman}. We deduce that $\sum \hbar^{\ell} C_{\ell}$ sends $\mathcal{S}^\omega$ into itself.  
\end{proof}

We can also prove that the differential operators $B_{\ell}$ have locally the form
$$B_{\ell} = \sum_{|\al|, |\be|\leqslant \ell} b_{\ell, \al , \be} \partial^{\al}\con{\partial}^{\be}$$ with analytic coefficients admitting holomorphic extensions on $U \times \con U$ such that on any compact set $|\tilde{b}_{\ell, \al , \be} | \leqslant C^{\ell +1} \ell !$. 
The coefficients $C_{\ell}$ of $B^{-1} = \sum \hbar^\ell C_\ell$ have exactly the same property, cf. Section \ref{sec:estim-holom-star}.

\section{Appendix}

In the first part, we discuss the asymptotic expansion of analytic symbols. We work
in an abstract setting where the symbols are not functions but belong to a
normed space, because it makes the discussion simpler. One goal is to compare
some remainder estimates \eqref{eq:dasan} coming from \cite{BoKr} with
the partial sums \eqref{eq:partial_Sum_symbol} introduced in \cite{SjAs}.
Even if we haven't found such a discussion in the literature, we do not claim
that these
results are original. 

The second part is a digression on the method we use to prove Theorem
\ref{th:estimate_bergman} and Theorem \ref{th:multiplier}. We propose
alternative arguments and some generalisation. 

\subsection{Analytic asymptotic expansion}
Let $(E, | \cdot |)$ be a normed space. Consider a sequence $(u(k))$ of $E$ having an asymptotic expansion of the form
\begin{gather} \label{eq:das}
u(k) = \sum_{\ell =0 }^{N-1} a_\ell k^{-\ell} + \bigo ( k^{-N}) , \qquad \forall N
\end{gather}
with coefficients $a_\ell \in E$. 
Two important facts are that $(u(k))$ is determined modulo $\bigo ( k^{-\infty})$ by the $a_\ell$'s, and for any sequence $(a_{\ell})$, there exists a sequence $(u(k))$ satisfying   (\ref{eq:das}).

We are interested in a particular class of asymptotic expansions where the remainder in (\ref{eq:das}) has the following explicit upper bound
\begin{gather} \label{eq:dasan} 
\bigl| u(k) - \sum _{\ell =0 }^{N-1} a_\ell k^{-\ell} \bigr| \leqslant C^{N+1} k^{-N} N! , \qquad \forall \; k\in \N^*,\;  N \in \N 
\end{gather}
for a constant $C$ independent of $k$ and $N$. Unlike the expansion
\eqref{eq:das}, the sequence $(u(k))$ is uniquely determined up to a
$\bigo (e^{-\ep k})$ by \eqref{eq:dasan}. Furthermore, the coefficients
$a_{\ell}$ have a particular growth. The precise result is as follows. 

\begin{prop} \label{prop:symbole_analytique} $ $
  \begin{enumerate}
  \item If a sequence $(u(k))$ satisfies (\ref{eq:dasan}), then the coefficients $(a_\ell)$ satisfy
    $ |a_\ell | \leqslant C^{\ell+1} \ell !$ with the same constant $C$.
  \item Assume that $(u(k))$ satisfies (\ref{eq:dasan}). Then $(u'(k))$ satisfies  (\ref{eq:dasan})  with the same coefficients $a_\ell$ and possibly a larger constant $C$ if and only if  there exists $\ep>0$ such that $u (k ) = u'( k) + \bigo ( e^{-\ep k})$.
  \item If  $|a_{\ell} | \leqslant (C')^{\ell+1} \ell! $ for $C'>0$ and
    $\ep>0$ is such that $\ep C' <1$, then
\begin{gather} \label{eq:partial_Sum_symbol}
  u (k) := \sum_{\ell =0 }^{\lfloor \ep k  \rfloor} a_{\ell} k^{-\ell} 
\end{gather}
satisfies (\ref{eq:dasan}) for some $C>0$.
  \end{enumerate}
\end{prop}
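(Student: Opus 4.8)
The plan is to treat the three assertions in order, each reducing to elementary estimates.

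\medskip

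\textbf{Assertion 1.} Suppose $(u(k))$ satisfies \eqref{eq:dasan}. Fix $\ell$ and choose $N=\ell+1$, so that $\bigl| u(k) - \sum_{j=0}^{\ell} a_j k^{-j} \bigr| \leqslant C^{\ell+2} k^{-\ell-1} (\ell+1)!$. Combined with the same inequality for $N=\ell$, a telescoping difference gives $\bigl| a_\ell k^{-\ell} \bigr| \leqslant C^{\ell+1} k^{-\ell} \ell! + C^{\ell+2} k^{-\ell-1} (\ell+1)!$ for every $k$. Multiplying by $k^\ell$ and letting $k \to \infty$ kills the second term, leaving $|a_\ell| \leqslant C^{\ell+1}\ell!$. (One may need to be slightly careful that the inequality holds for all $k$, not just asymptotically, which is exactly what \eqref{eq:dasan} provides; this is the point of working with that explicit bound rather than the weaker \eqref{eq:das}.)

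\medskip

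\textbf{Assertion 2.} If $u(k) = u'(k) + \bigo(e^{-\ep k})$, then $u'(k) - \sum_{\ell=0}^{N-1} a_\ell k^{-\ell}$ is bounded by $C^{N+1} k^{-N} N! + C'' e^{-\ep k}$; one checks that $e^{-\ep k}$ is itself $\bigo(\widetilde C^{N+1} k^{-N} N!)$ uniformly in $N$ (optimize over $N$, or simply note $e^{-\ep k} \leqslant \sup_N (\text{something})$), so \eqref{eq:dasan} holds for $u'$ with a larger constant. Conversely, if both $u$ and $u'$ satisfy \eqref{eq:dasan} with the same $a_\ell$ and constants $C_1, C_2$, set $C = \max(C_1,C_2)$; then $|u(k)-u'(k)| \leqslant 2 C^{N+1} k^{-N} N!$ for every $N$. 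Now choose $N = \lfloor k/(eC) \rfloor$ (roughly): by Stirling, $C^{N} k^{-N} N! \sim (CN/k)^N \sqrt{N}$, and with this choice of $N$ the ratio $CN/k$ is a fixed constant $<1/e \cdot e = \ldots$ — more precisely one gets a bound of the form $e^{-\ep k}$ for a suitable $\ep>0$. This optimization of $N$ as a linear function of $k$ is the technical heart of the equivalence.

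\medskip

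\textbf{Assertion 3.} With $u(k) = \sum_{\ell=0}^{\lfloor \ep k\rfloor} a_\ell k^{-\ell}$ and $|a_\ell| \leqslant (C')^{\ell+1}\ell!$, fix $N$ and split. If $N \leqslant \lfloor \ep k \rfloor$, the tail is $\bigl| \sum_{\ell=N}^{\lfloor \ep k\rfloor} a_\ell k^{-\ell} \bigr| \leqslant \sum_{\ell \geqslant N} (C')^{\ell+1}\ell! k^{-\ell}$; for $\ell$ in this range $\ell! k^{-\ell} \leqslant (\ell/k)^\ell e^{\ell} \cdot(\text{poly})$ and $\ell/k \leqslant \ep$, so each successive term is smaller than the previous by a factor bounded by $C'\ep e \cdot(\ldots)$ — shrink $\ep$ if needed so this geometric ratio is $<1$ — and the sum is dominated by its first term, $\bigo((C')^{N+1} N! k^{-N})$, as required. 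If instead $N > \lfloor \ep k\rfloor$, then $u(k)$ already contains all terms up to index $\lfloor \ep k\rfloor$, so $u(k) - \sum_{\ell=0}^{N-1} a_\ell k^{-\ell} = -\sum_{\ell = \lfloor \ep k\rfloor + 1}^{N-1} a_\ell k^{-\ell}$; each such term with $\ell \geqslant \ep k$ is bounded, via $\ell! \leqslant \ell^\ell$ and $\ell \geqslant \ep k$, by an exponentially small quantity $e^{-\ep' k}$, and again one checks $e^{-\ep' k} = \bigo(C^{N+1} k^{-N} N!)$ uniformly in this range of $N$. Combining the two cases gives \eqref{eq:dasan}.

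\medskip

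The main obstacle in all three parts is the same bookkeeping lemma: converting between an exponentially small bound $e^{-\ep k}$ and a uniform-in-$N$ bound $C^{N+1} k^{-N} N!$, which amounts to choosing $N$ proportional to $k$ and applying Stirling's formula carefully to track the optimal constant. Once that conversion is isolated as a preliminary estimate, each assertion follows by a short geometric-series argument.
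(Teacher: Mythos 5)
Your proposal follows essentially the same route as the paper: the limiting argument for assertion 1, the conversion between a bound $e^{-\ep k}$ and a bound $C^{N+1}k^{-N}N!$ by taking $N$ proportional to $k$ for assertion 2, and the two-case geometric-series splitting of the sum at $\lfloor \ep k\rfloor$ for assertion 3. One small correction: in assertion 3 the Stirling detour and the resulting ``shrink $\ep$ if needed'' are unnecessary and not allowed by the statement (the $\ep$ with $\ep C'<1$ is given); the exact ratio of consecutive terms, $\bigl((C')^{\ell+1}\ell!\,k^{-\ell}\bigr)/\bigl((C')^{\ell}(\ell-1)!\,k^{-\ell+1}\bigr)=C'\ell/k\leqslant C'\ep<1$, already yields the domination of the tail by its first term with the given $\ep$.
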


\begin{proof} 1. (\ref{eq:dasan}) implies that $|a_N k^{-N} | \leqslant C^{N+1} k^{-N} N! + C^{N+2} k^{-N-1} (N+1)!$. Multiplying by $k^N$ and taking the limit $k \rightarrow \infty$, we get $|a_N | \leqslant C^{N+1}  N!$.

  2. Assume that $(u(k))$ and $(u'(k))$ satisfy both (\ref{eq:dasan}). Then
  $$| u (k) - u'(k) | \leqslant 2C^{N+1} k^{-N} N! \leqslant 2 C (CN/k)^N.$$  
  Choose $\ep$ such that $C \ep <1$ and set $N =\lfloor \ep k  \rfloor$. Then $ N \leqslant \ep k$ so $CN/k \leqslant C \ep$ so $(CN/k)^N \leqslant (C \ep)^N \leqslant (C\ep )^{\ep k - 1 }$ because $N \geqslant \ep k -1 $. So $|u (k) - u'(k)| \leqslant \ep^{-1} \exp ( \ep k \ln ( C \ep ) )$. Hence $u (k) = u'(k) +\bigo ( e^{-\ep' k})$ with $\ep' = - \ep \ln (C\ep)$.
  Conversely, we have to prove that for any $\ep$, there exists $C >0$ such that $e^{-\ep k } \leqslant C (CN/k)^N$ for any $k \in \N^*$ and $N\in \N$. The function $x \rightarrow \ln x - \epsilon x$ is bounded above on $\R_{>0}$, so
  $$\ln (k/N) - \ep (k/N) \leqslant \ln C, \qquad \forall \; k,N \in \N^*$$ if $C$ is sufficiently large. Multiplying with $N$ and taking the exponential, we get $e^{-\ep k} \leqslant (CN/k)^N$. We conclude easily.

  3. By assumption $|k^{-\ell } a_{\ell} | \leqslant C' (C'/k)^\ell \ell ! =: b_{\ell}$. We will use that
  $$ \frac{b_\ell} {b_{\ell -1}} = C' \ell / k.$$
  Let $N' =\lfloor \ep k \rfloor$.
  Assume that $N \leqslant \ell \leqslant N'$.  Then $\ell \leqslant \ep k$, so $b_{\ell} / b_{\ell-1} \leqslant C'\ep <1$, so
  $$ \sum_{\ell = N}^{N'} b_{\ell} \leqslant b_N ( 1 + \ldots + (C'\ep)^{N' - N} ) \leqslant \frac{b_N}{1 - C'\ep} = \frac{C'}{1 - C' \ep} (C'/k)^N N! $$
  Assume now that $N' < \ell  \leqslant N$. Then $\ep k \leqslant \ell$, so
  $b_{\ell }/ b_{\ell +1 } \leqslant (C'\ep)^{-1} =: r$. Since $r>1$, 
  \begin{gather*}  \sum_{\ell = N'+1}^{N-1} b_{\ell} \leqslant  \sum_{\ell = N'+1}^{N} b_{\ell} \leqslant b_N ( 1 + r + \ldots + r^{N -N' -1} ) \\ \leqslant b_N \frac{ r^{N- N'}}{ r -1 } \leqslant b_N \frac{r^N}{r-1} = \frac{C'}{r-1} (\ep k )^{-N} N!  
  \end{gather*}
  which concludes the proof.
\end{proof}

Let us call a formal series $a = \sum \hbar^\ell a_{\ell}$ of $E[[\hbar]]$ an {\em analytic symbol} if $|a_{\ell}| \leqslant C^{\ell+1} \ell!$ for some $C>0$. For any $\ep >0$, we set $a(\ep,k) := \sum_{\ell =0 }^{\lfloor \ep k \rfloor } k^{-\ell} a_\ell$. 
Choose a second normed space $(E', | \cdot | ')$ and let $\mathcal{L} ( E, E')$ (resp. $\mathcal{B} (E,E')$)  be the space of bounded linear maps $E \rightarrow E'$ (resp. bounded bilinear maps $E \times E \rightarrow E'$) with its natural norm.

\begin{lm} \label{lem:ledernier}
  $ $
  \begin{enumerate}
    \item 
      For any analytic symbols $a \in E[[\hbar ]]$ and $P \in \mathcal{L} (E, E') [[\hbar]]$, $b = \sum_{\ell, m } \hbar ^{\ell + m } P_{\ell} ( a_m)$ is an analytic symbol of $E'[[\hbar]]$. Furthermore, if $\ep>0$ is sufficiently small, then there exists $C>0$ such that 
      $$  P(\ep,k ) (a ( \ep,k ) ) = b ( \ep,k) + \bigo ( e^{- k/C} ).$$
      \item  For any analytic symbols $a, a' \in E[[\hbar ]]$ and $B \in \mathcal{B} (E, E') [[\hbar]]$, $b = \sum_{\ell, m,p } \hbar ^{\ell + m+p } B_{\ell} ( a_m, a'_p)$ is an analytic symbol of $E'[[\hbar]]$. Furthermore if $\ep>0$ is sufficiently small, then there exists $C>0$ such that 
      $$  B(\ep,k ) (a ( \ep,k ), a' ( \ep, k) ) = b ( \ep,k) + \bigo ( e^{-  k/C} ).$$
  \end{enumerate}
\end{lm}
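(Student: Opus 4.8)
The plan is to reduce both assertions to the one-variable situation of Proposition \ref{prop:symbole_analytique}. First I would check that $b$ is an analytic symbol. For assertion 1, write $b_N = \sum_{\ell+m=N} P_\ell(a_m)$; using $\|P_\ell\| \leqslant C^{\ell+1}\ell!$ and $|a_m| \leqslant C^{m+1} m!$, bound each term by $C^{N+2} \ell! \, m!$, and then use $\ell! \, m! \leqslant N!$ together with the fact that the number of pairs $(\ell,m)$ with $\ell+m=N$ is $N+1 \leqslant 2^N$ to get $|b_N|' \leqslant (C')^{N+1} N!$ after enlarging the constant. For assertion 2 the argument is identical with the extra summation index, the combinatorial factor now being $\binom{N+2}{2} \leqslant 2^N$ (up to constants) and the trinomial bound $\ell! \, m! \, p! \leqslant N!$. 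So in both cases $b$ is indeed an analytic symbol, and $b(\ep,k)$ makes sense for $\ep$ small by part 3 of Proposition \ref{prop:symbole_analytique}.

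Next I would handle the remainder estimate, which is the substantive part. The idea is to introduce the ``diagonal'' sequence $u(k) := P(\ep,k)(a(\ep,k)) \in E'$ and show it satisfies the estimate \eqref{eq:dasan}, with the same coefficients as the formal series $b$; then by part 2 of Proposition \ref{prop:symbole_analytique} applied to the two sequences $u(k)$ and $b(\ep,k)$ — the latter satisfying \eqref{eq:dasan} by part 3 — we conclude $u(k) = b(\ep,k) + \bigo(e^{-k/C})$. To see that $u(k)$ satisfies \eqref{eq:dasan}, write
\begin{gather*}
u(k) = \sum_{\ell=0}^{\lfloor \ep k\rfloor} \sum_{m=0}^{\lfloor \ep k\rfloor} k^{-\ell-m} P_\ell(a_m),
\end{gather*}
compare with the truncated formal sum $\sum_{N=0}^{N_0 - 1} k^{-N} b_N$ for a given $N_0$, and split the difference into two types of terms: the ``tail'' terms with $\ell + m \geqslant N_0$ that are present in $u(k)$, and the ``missing'' terms with $\ell + m < N_0$ but $\ell > \lfloor\ep k\rfloor$ or $m > \lfloor\ep k\rfloor$ (which forces $N_0 > \ep k$, so these only occur in a regime where $C^{N_0+1} k^{-N_0} N_0!$ is already controlled). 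Each type is bounded by a geometric-type series in the ratio $C\ell/k$ (respectively $C m/k$), exactly as in the proof of part 3 of Proposition \ref{prop:symbole_analytique}; summing gives a bound of the form $(C'')^{N_0+1} k^{-N_0} N_0!$, which is \eqref{eq:dasan}. That the coefficients of $u(k)$ — in the sense of its honest asymptotic expansion — are the $b_N$ follows since for fixed $N_0$ and $k$ large enough all relevant terms are present.

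Assertion 2 is proved in exactly the same way: fix $a'$ and regard $B(\cdot, a'(\ep,k))$ as a family of linear operators, or simply repeat the three-index bookkeeping verbatim. The main obstacle is the remainder estimate for $u(k)$: one has to organize the double (resp. triple) truncation carefully so that the error from truncating $P$ at $\lfloor\ep k\rfloor$, the error from truncating $a$ at $\lfloor\ep k\rfloor$, and the tail beyond order $N_0$ are all simultaneously controlled by a single geometric argument in the small parameter $C\ep < 1$. Once the indices are split as above this is routine, but getting the split right — in particular noticing that the ``missing low-order terms'' can only appear when $N_0$ already exceeds $\ep k$ — is the one point that requires care.
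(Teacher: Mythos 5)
Your proposal is correct, but it takes a more roundabout route than the paper. The paper exploits the simple observation that, with $N = \lfloor \ep k \rfloor$, every term of $b(\ep,k)$ (indexed by $\ell+m\leqslant N$) automatically appears in $P(\ep,k)(a(\ep,k))$ (indexed by $\ell\leqslant N$, $m\leqslant N$), so the difference is \emph{exactly} the sum over the corner $\ell,m\leqslant N$, $\ell+m>N$; there are at most $N^2$ such terms, each bounded by $C^2(Cp/k)^p\leqslant C^2(2C\ep)^{\ep k}$ for $p=\ell+m\in(N,2N]$, and one is done in a single estimate. Your route instead verifies that $u(k)=P(\ep,k)(a(\ep,k))$ satisfies the full family of remainder estimates \eqref{eq:dasan} for every truncation order $N_0$ and then invokes parts 2 and 3 of Proposition \ref{prop:symbole_analytique}. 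This works — your tail terms are handled by the same geometric series in $2C\ep<1$, and your ``missing'' low-order terms (which force $N_0>\ep k$) are controlled by the reverse geometric series in $1/(C\ep)$, exactly as in part 3 of that proposition — but it is strictly more work: the missing-terms regime, which you correctly flag as the delicate point, simply does not arise in the paper's comparison, because truncating the product at total degree $N$ is a sub-sum of truncating each factor at degree $N$. What your approach buys is generality (it would apply to any two partial sums with the same coefficients, not just these particular ones) and a cleaner separation between the algebra and the analysis; what the direct approach buys is a three-line proof. Your treatment of the analyticity of $b$ and of the bilinear case coincides with the paper's.
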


\begin{proof}
  Assume that $\| P_{\ell} \| \leqslant C^{\ell+1} \ell !$ and $| a_{m} | \leqslant C^{m+1} m!$. Then $| P_{\ell} a_{m} | \leqslant C^2 C^{\ell+m } (\ell + m )!$, so $| b_p | \leqslant (p+1) C^{2 + p} p! $ and we conclude that $b$ is analytic. Furthermore, for $N = \lfloor \ep k \rfloor$, we have
  $$ \bigl| P(\ep,k ) (a ( \ep,k ) ) - b ( \ep,k) \bigr| \leqslant \sum ^{\ell \leqslant N, \; m \leqslant N}_{N< \ell+ m} k^{-\ell -m} |P_\ell (a_m)| \leqslant N \sum_{p = N+1}^{2N} C^2 (Cp/k)^p  $$
  by the previous estimate. 
  $ N < p\leqslant 2N$ implies that $\ep k \leqslant p \leqslant 2 \ep k$ so that $(Cp/k)^p \leqslant (2 C \ep)^p \leqslant (2 C\ep )^{\ep k}$ where we have assumed that $2C \epsilon <1$. It follows that
  $$ \bigl| P(\ep,k ) (a ( \ep,k ) ) - b ( \ep,k) \bigr| \leqslant N^2 C^2 (2 C \ep )^{\ep k} \leqslant (\ep C) ^2 k^2 (2 C\ep)^{\ep k} = \bigo (e^{-k/C'}) $$
  for a sufficiently large $C'$. The proof of the second part is similar. 
\end{proof}

\subsection{Estimates for holomorphic star-products} \label{sec:estim-holom-star}

Let $\op{End} (E)$ be the algebra of endomorphisms of a vector space $E$. Let $(\| \cdot \|_\ell, \ell \in \N)$ be a family of seminorms of $\op{End} E$ satisfying
\begin{gather} \label{eq:submult} 
\| P \circ Q \| _{p+q} \leqslant \| P \| _p  \| Q\|_q , \qquad \forall P, Q \in \op{End} E
\end{gather}
for any $p, q \in \N$.
Consider a formal series $\op{id} - \sum_{\ell \geqslant 1} \hbar^{\ell} P_\ell$ of $(\op{End} E)[[\hbar]]$ with inverse $\op{id} + \sum_{\ell \geqslant 1} \hbar^{\ell} Q_\ell$.
\begin{lm} \label{lem:un_autre}
  If there exists $C>0$ such that $\| P_{\ell} \|_{\ell} \leqslant C^{\ell }$ for any $\ell$, then there exists $C'>0$ such that $\| Q_{\ell} \|_{\ell} \leqslant (C')^{\ell}$ for any $\ell$.
\end{lm}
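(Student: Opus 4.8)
The plan is to prove this by the same inductive estimate that appeared in the proof of Theorem~\ref{th:estimate_bergman}, now phrased purely in terms of the submultiplicative family of seminorms $(\|\cdot\|_\ell)$. Writing $Q = \sum_{\ell\geqslant 0}\hbar^\ell Q_\ell$ with $Q_0 = \op{id}$ for the inverse of $\op{id} - \sum_{\ell\geqslant 1}\hbar^\ell P_\ell$, the relation $(\op{id}-\sum \hbar^\ell P_\ell)(\op{id}+\sum\hbar^\ell Q_\ell)=\op{id}$ yields, comparing coefficients of $\hbar^m$ for $m\geqslant 1$,
\begin{gather*}
Q_m = \sum_{\ell=1}^m P_\ell\, Q_{m-\ell},
\end{gather*}
and solving this recursion gives the Neumann-type expansion
\begin{gather*}
Q_m = \sum_{\substack{j\geqslant 1,\ (i_1,\ldots,i_j)\in\Z_{>0}^j\\ i_1+\cdots+i_j = m}} P_{i_1} P_{i_2} \cdots P_{i_j},
\end{gather*}
exactly as for the $\rho_m$'s in the lemma before Theorem~\ref{th:estimate_bergman}. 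This is the analogue of Equation~\eqref{eq:2}.

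Next I would estimate each term. By the hypothesis $\|P_{i_r}\|_{i_r}\leqslant C^{i_r}$ and $j$-fold iteration of the submultiplicativity \eqref{eq:submult}, each summand satisfies
\begin{gather*}
\| P_{i_1}\cdots P_{i_j} \|_{i_1+\cdots+i_j} \leqslant \prod_{r=1}^j \|P_{i_r}\|_{i_r} \leqslant \prod_{r=1}^j C^{i_r} = C^{m}.
\end{gather*}
The number of compositions $(i_1,\ldots,i_j)$ of $m$ into positive parts (over all lengths $j$) is $2^{m-1}$, so summing and using the triangle inequality for $\|\cdot\|_m$ gives $\|Q_m\|_m \leqslant 2^{m-1} C^m \leqslant (2C)^m$, and the claim follows with $C' = 2C$ (or $C'=\max(2C,1)$ to cover $m=0$ trivially).

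The argument is essentially routine once the recursion is set up; the only point requiring a little care is the iterated use of \eqref{eq:submult}, which as stated applies to a product of two operators with the indices adding. One must check by induction on $j$ that it chains correctly, i.e. that $\|P_{i_1}\cdots P_{i_j}\|_{i_1+\cdots+i_j}\leqslant \prod_r \|P_{i_r}\|_{i_r}$: grouping $P_{i_1}\cdots P_{i_j} = P_{i_1}\circ(P_{i_2}\cdots P_{i_j})$ and applying \eqref{eq:submult} with $p=i_1$, $q=i_2+\cdots+i_j$ reduces the length by one. So the main (very mild) obstacle is purely bookkeeping: making sure the seminorm index used at each stage is the sum of the remaining indices, so that \eqref{eq:submult} applies verbatim. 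No analyticity input beyond the hypothesis is needed — this lemma is the abstract skeleton underlying both Theorem~\ref{th:estimate_bergman} and part~3 of Theorem~\ref{th:multiplier}.
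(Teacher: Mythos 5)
Your argument is correct and coincides with the first of the two proofs given in the paper: establish the Neumann-type expansion \eqref{eq:8} for $Q_m$, bound each of the $2^{m-1}$ summands by $C^m$ via iterated submultiplicativity, and conclude with $C'=2C$. (The paper also sketches a second, generating-function proof, but your route is exactly its primary one.)
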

We give two proofs, the first one is a direct generalization of the proof of
Theorem \ref{th:estimate_bergman}, the second is inspired from \cite{BoKr}, \cite{SjAs}.

\begin{proof}
 A first proof is to establish the formula
\begin{gather} \label{eq:8} 
 Q_{m} = \sum _{\substack{\ell \geqslant 1, \; (i_1, \ldots, i_{\ell}) \in \Z_{>0}^\ell,\\ i_1 + \ldots + i_\ell = m} } P_{i_1} \ldots P_{i_{\ell}} 
\end{gather}
 and we conclude easily by using that the number of terms in the sum is $2^{m-1}$.
 Another proof less precise but interesting as well is to introduce for any formal series $R= \sum \hbar^\ell R_{\ell}$ and $\rho >0$ the series $f( R, \rho) = \sum \rho^{\ell} \| R_{\ell} \|_{\ell} $. Then
 \begin{enumerate}
   \item 
     $f(R,\rho)$ converges for some $\rho >0$ if and only there exists $C>0$ such that $\| R_{\ell} \|_{\ell} \leqslant C^{\ell +1}$ for any $\ell$.
   \item $f(RS, \rho) \leqslant f( R, \rho) f(S, \rho)$ by  (\ref{eq:submult}).
   \item if $(R^{(n)})_n$ is a sequence of $(\op{End} E)[[\hbar]]$ such that $R^{(n)} = \bigo ( \hbar^n)$ for any $n$, then $f ( \sum_n R^{(n)} , \rho ) \leqslant \sum_n f(R^{(n)} , \rho)$ by triangle inequality.
 \end{enumerate}
Then we can argue as follows. Set $R = \sum_{\ell \geqslant 1} \hbar^{\ell} P_{\ell}$. The assumption on the $P_{\ell}$'s implies that $f ( R, \rho) = \bigo ( \rho)$, so $f( R, \rho) \leqslant \delta <1$ when $\rho$ is sufficiently small. Applying the previous properties we have
 $$f( \sum R^n, \rho) \leqslant \sum f( R^n, \rho) \leqslant \sum \delta^n < \infty $$
 which concludes the proof because $\op{id} + \sum \hbar^{\ell} Q_{\ell} = \sum R^n$. 
\end{proof}

We apply this to holomorphic differential operators of an open set $\Om $ of $\C^n$.
Consider a formal series $\op{id} - \sum_{\ell \geqslant 1} \hbar^{\ell} P_\ell$ where for any $\ell$,
$$P_{\ell} = \sum _{|\al|\leqslant  N \ell } a_{\ell, \al} \frac{1}{\al!} \partial^\al $$
and the $a_{\al, \ell} $'s are holomorphic functions of $\Om$. Here $N$ is any positive integer, $N=1$ or $2$ in our applications. Then the inverse $\op{id} + \sum_{\ell \geqslant 1} \hbar^{\ell} Q_\ell$ has the same form $ Q_{\ell} = \sum _{|\al|\leqslant  N \ell } b_{\ell, \al} \frac{1}{\al!} \partial^\al $ as follows for instance from (\ref{eq:8}).

\begin{lm} \label{lem:vrai_dernier}
  Assume that on any compact set $K$ of $\Om$, there exists $C_K$ such that $|a_{\ell, \al} | \leqslant C_K^{\ell } \ell^{\ell}$ for any $\al$, $\ell$. Then the family $(b_{\ell, \al}) $ satisfies the same estimates, with different constants $ C_K$.  
\end{lm}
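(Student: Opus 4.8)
The statement concerns holomorphic differential operators $P_\ell = \sum_{|\al| \leqslant N\ell} a_{\ell,\al}\,\partial^\al/\al!$ on $\Om \subset \C^n$, and asserts that if the coefficients satisfy $|a_{\ell,\al}| \leqslant C_K^\ell \ell^\ell$ on each compact $K$, then the coefficients $b_{\ell,\al}$ of the inverse $\op{id} + \sum \hbar^\ell Q_\ell$ satisfy the same type of estimate. The plan is to reuse the seminorm machinery of Lemma \ref{lem:dertech} and the structure of the proofs of Theorem \ref{th:estimate_bergman} and Lemma \ref{lem:un_autre}. First I would fix a compact $K$, choose a slightly larger compact $K'$ with $K$ in its interior, and identify a family of nested balls (or polydiscs) $\Om_t \supset \Om_s$ with $K \subset \Om_s \subset \Om_t \subset K'$. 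On these I use the seminorms $\|\cdot\|_{t,s}$ from just before Lemma \ref{lem:dertech} and the operator norms $\|P\|_{t,s}$.

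The key step is to show that the hypothesis $|a_{\ell,\al}| \leqslant C_K^\ell \ell^\ell$ (on $K'$, say, with constant $C_{K'}$) together with $|\al| \leqslant N\ell$ implies an estimate of the form
\begin{gather*}
  \| P_\ell \|_{t,s} \leqslant \frac{(C'\ell)^\ell}{(t-s)^\ell}, \qquad 0 < s < t \leqslant t_0,
\end{gather*}
for some $C' > 0$. This is exactly the computation carried out in the proof of \eqref{eq:step1}: by assertion 1 of Lemma \ref{lem:dertech}, $\|a_{\ell,\al}\,\partial^\al/\al!\|_{t,s} \leqslant C^{|\al|+1}/(t-s)^{|\al|}$ (after absorbing the sup of $|a_{\ell,\al}|$), then one uses $|\al| \leqslant N\ell$ to bound $(C/(t-s))^{|\al|} \leqslant (C/(t-s))^{N\ell}$ — since the denominator $t-s$ is $\leqslant t_0 \leqslant 1$ we may assume, so raising the exponent only increases the bound — multiplies by $\ell^\ell$ from the hypothesis, counts the $\leqslant (N\ell+n)^n$ many multi-indices with $|\al| \leqslant N\ell$ (polynomial in $\ell$, hence absorbed into $(C'\ell)^\ell$), and sets $C'$ accordingly. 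The exponent $\ell$ rather than $N\ell$ in the target is obtained because the whole thing is a bound of the shape $(\text{const}\cdot\ell)^{N\ell}/(t-s)^{N\ell}$, and one simply relabels $(t-s)$ and the constant (the parameter $t_0$ is fixed once and for all depending on $K$).

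From here the argument is purely formal and follows Lemma \ref{lem:un_autre}: the seminorms $\|\cdot\|_{\ell} := \|\cdot\|_{t_0, t_0/2}$ (or better, the whole scale with the rescaling trick used in Lemma \ref{lem:dertech}) are submultiplicative in the sense $\|P \circ Q\|_{t,s} \leqslant \|P\|_{r,s}\|Q\|_{t,r}$, and by assertion 2 of Lemma \ref{lem:dertech} the bound $\|P_\ell\|_{t,s} \leqslant (C'\ell)^\ell/(t-s)^\ell$ propagates through compositions: $\|P_{i_1} \circ \cdots \circ P_{i_j}\|_{t,s} \leqslant (C'm)^m/(t-s)^m$ with $m = i_1 + \cdots + i_j$. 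Then formula \eqref{eq:8}, $Q_m = \sum P_{i_1}\cdots P_{i_j}$ over compositions with $\sum i = m$, has $2^{m-1}$ terms, so with $t = 2s$ and $s$ small (depending on $K$) one gets $\|Q_m\|_{s} \leqslant 2^{m-1}(C'm)^m / s^m \leqslant (C''m)^m$, i.e. $\|Q_m\|_s \leqslant (C'')^m m^m$. Finally, Cauchy's inequality on the ball $\Om_s$ recovers bounds on the Taylor coefficients of $Q_m$, hence on the functions $b_{m,\al}$ (which are, up to factorials, derivatives of $Q_m(\text{monomials})$ evaluated at points of $K$), giving $|b_{m,\al}| \leqslant C_K^m m^m$ on $K$ with a new constant.

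\textbf{Main obstacle.} The only genuinely delicate point is bookkeeping: keeping track of the exponent $N\ell$ versus $\ell$ and making sure that passing from $N\ell$ to $\ell$ in the exponent costs only a harmless change of constant, which works precisely because one is free to fix the ball radii $t_0, s$ depending only on $K$, so that powers of $1/(t-s)$ and the combinatorial factors $(N\ell+n)^n$ are all absorbed into $(C''\ell)^\ell$. Everything else is a verbatim repetition of the arguments already given for Theorem \ref{th:estimate_bergman} and Lemma \ref{lem:un_autre}, which is why the paper relegates this to the appendix with only a pointer to \eqref{eq:8}.
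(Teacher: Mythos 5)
Your overall route is the paper's: the seminorms $\|\cdot\|_{t,s}$ of Lemma \ref{lem:dertech}, the composition formula \eqref{eq:8}, a bound on $\|Q_m\|_{t_0,t_0/2}$, and evaluation on monomials. For $N=1$ your argument is complete and essentially identical to the paper's proof (which, for the last step, uses the exact identity $b_{\ell,\al}(x)=Q_\ell\bigl((z-x)^\al\bigr)(x)$ — no Cauchy inequality needed, since $\tfrac{1}{\be!}\partial^\be(z-x)^\al$ at $z=x$ is $\delta_{\al\be}$).

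The gap is the ``relabeling'' by which you pass from a bound of the shape $(C\ell)^{N\ell}/(t-s)^{N\ell}$ to $(C'\ell)^{\ell}/(t-s)^{\ell}$. The inequality goes the wrong way: for $N\geqslant 2$ and $t-s$ small, $(t-s)^{-N\ell}$ dominates $(t-s)^{-\ell}$, and you cannot fix the radii once and for all, because assertion 2 of Lemma \ref{lem:dertech} needs its hypothesis uniformly for all $0<s<t\leqslant t_0$ (its proof inserts intermediate radii $r$ with $r-s$ arbitrarily small relative to $t-s$). If you instead apply assertion 2 honestly with exponent $N\ell$, the numerator you must feed it is $(CN\ell)^{N\ell}$, you lose the information that the hypothesis only gives a numerator $\ell^{\ell}$, and the output is $\|Q_m\|\lesssim C^m m^{Nm}$, i.e. $|b_{m,\al}|\leqslant C^m m^{Nm}$ — strictly weaker than the claim. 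The paper's device for decoupling the coefficient size from the operator order is the family of seminorms $\|P\|_{\ell}=\sup\,\ell^{-\ell}(t-s)^{\ell}\|P\|_{t,s}$ together with the purely algebraic Lemma \ref{lem:un_autre} (via the submultiplicativity \eqref{eq:submult}), rather than Lemma \ref{lem:dertech}(2); this is the ingredient your write-up is missing. Be aware, though, that this mechanism too is really an $N=1$ argument: for $N=2$ the sup defining $\|P_\ell\|_\ell$ is infinite, and even after replacing $(t-s)^{\ell}$ by $(t-s)^{N\ell}$ the submultiplicativity \eqref{eq:submult} fails by a factor $\bigl((p+q)^{p+q}/p^pq^q\bigr)^{N-1}$, which is not geometric. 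This is not mere bookkeeping: taking $n=1$, $P_1=\tfrac{a}{2!}\partial^2$ with $a$ a nonzero constant and all other $P_\ell=0$ (so the hypothesis holds with $C_K=|a|$), one gets $Q_m=P_1^m$ and $b_{m,2m}=a^m(2m)!/2^m\geqslant c^m m^{2m}$, so for $N=2$ the target estimate $|b_{m,\al}|\leqslant C^m m^m$ is simply out of reach and the best one can prove is $C^m m^{Nm}$. So treat your proof as correct for $N=1$, and do not expect any relabeling of constants to deliver the stated bound for $N\geqslant 2$.
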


\begin{proof} 
Let $x_0 \in \Om$ and $0<t_0\leqslant 1$ be such that the closure of the ball $B(x_0, t_0)$ is contained in $\Om$. For any $0<s<t < t_0$, define as in Section \ref{sec:main-estimates} $\| P \|_{t,s}$ as the norm of the restriction $P : \mathcal{B} ( B(x_0, t)) \rightarrow \mathcal{B} ( B(x_0, s))$. Set
$$ \| P \|_{\ell} := \sup \{ \ell^{-\ell} (t-s)^\ell \| P \|_{t,s} / \;  0<s<t<t_0 \} .$$
The submultiplicativity (\ref{eq:submult}) follows from Lemma \ref{lem:dertech}.
By the first part of Lemma \ref{lem:dertech} and the assumption on the $a_{\ell,\al}$'s, we have $\| P_{\ell} \|_{\ell} \leqslant C^\ell$. This implies by Lemma \ref{lem:un_autre} that $\|Q_{\ell} \|_{\ell} \leqslant C^\ell$ with a larger $C$. For any $x \in B (x_0, t_0/2)$, the sup norm of $(z - x)^\al$ on $B(x_0,t_0)$ is smaller than $(3 t_0/2)^{|\al|} \leqslant (3/2)^{\ell}$. So
\begin{gather*}
| b_{\ell, \al} (x) | = | Q_{\ell} ( (z -x )^{\al} ) (x) | \leqslant (3/2)^{\ell} \| Q_{\ell} \|_{t_0, \frac{t_0}{2}}   \\ \leqslant(3/2)^{\ell}   \ell^{\ell} (t_0/2)^{-\ell} \| Q_{\ell}  \|_{\ell}  \leqslant (3C /t_0)^{\ell} \ell^{\ell} . 
\end{gather*}
as was to be proved.
\end{proof}

\bibliographystyle{abbrv}
\bibliography{Analytic}

\begin{thebibliography}{10}

\bibitem{BeBeSj}
R.~Berman, B.~Berndtsson, and J.~Sj{\" o}strand.
\newblock A direct approach to {B}ergman kernel asymptotics for positive line
  bundles.
\newblock {\em Ark. Mat.}, 46(2):197--217, 2008.

\bibitem{Be12}
R.~J. Berman.
\newblock Sharp asymptotics for {T}oeplitz determinants and convergence towards
  the {G}aussian free field on {R}iemann surfaces.
\newblock {\em Int. Math. Res. Not. IMRN}, (22):5031--5062, 2012.

\bibitem{Bouche}
T.~Bouche.
\newblock Convergence de la m\'{e}trique de {F}ubini-{S}tudy d'un fibr\'{e}
  lin\'{e}aire positif.
\newblock {\em Ann. Inst. Fourier (Grenoble)}, 40(1):117--130, 1990.

\bibitem{BoGu}
L.~Boutet~de Monvel and V.~Guillemin.
\newblock {\em The spectral theory of {T}oeplitz operators}, volume~99 of {\em
  Annals of Mathematics Studies}.
\newblock Princeton University Press, Princeton, NJ, 1981.

\bibitem{BoKr}
L.~Boutet~de Monvel and P.~Kr\'{e}e.
\newblock Pseudo-differential operators and {G}evrey classes.
\newblock {\em Ann. Inst. Fourier (Grenoble)}, 17(fasc., fasc. 1):295--323,
  1967.

\bibitem{BoSj}
L.~Boutet~de Monvel and J.~Sj{\" o}strand.
\newblock Sur la singularit\'e des noyaux de {B}ergman et de {S}zeg{\H o}.
\newblock In {\em Journ\'ees: \'{E}quations aux {D}\'eriv\'ees {P}artielles de
  {R}ennes (1975)}, pages 123--164. Ast\'erisque, No. 34--35. Soc. Math.
  France, Paris, 1976.

\bibitem{Ca}
D.~Catlin.
\newblock The {B}ergman kernel and a theorem of {T}ian.
\newblock In {\em Analysis and geometry in several complex variables ({K}atata,
  1997)}, Trends Math., pages 1--23. Birkh\"auser Boston, Boston, MA, 1999.

\bibitem{oim}
L.~Charles.
\newblock Berezin-{T}oeplitz operators, a semi-classical approach.
\newblock {\em Comm. Math. Phys.}, 239(1-2):1--28, 2003.

\bibitem{Del}
A.~{Deleporte}.
\newblock {Toeplitz operators with analytic symbols}.
\newblock {\em arXiv:1812.07202}, Dec 2018.

\bibitem{Gu3}
V.~Guillemin.
\newblock Star products on compact pre-quantizable symplectic manifolds.
\newblock {\em Lett. Math. Phys.}, 35(1):85--89, 1995.

\bibitem{HeLuXu}
H.~Hezari, Z.~Lu, and H.~Xu.
\newblock Off-diagonal asymptotic properties of bergman kernels associated to
  analytic kähler potentials.
\newblock {\em arXiv:1705.09281}, 2017.

\bibitem{Horm}
L.~H{\"o}rmander.
\newblock {\em The analysis of linear partial differential operators. {I}},
  volume 256 of {\em Grundlehren der Mathematischen Wissenschaften}.
\newblock Springer-Verlag, Berlin, second edition, 1990.

\bibitem{MaMa}
X.~Ma and G.~Marinescu.
\newblock {\em Holomorphic {M}orse inequalities and {B}ergman kernels}, volume
  254 of {\em Progress in Mathematics}.
\newblock Birkh\"auser Verlag, Basel, 2007.

\bibitem{RoSjVu}
O.~{Rouby}, J.~{Sj{\" o}strand}, and S.~{Vu Ngoc}.
\newblock {Analytic Bergman operators in the semiclassical limit}.
\newblock {\em arXiv:1808.00199}, Aug 2018.

\bibitem{SjProp}
J.~Sj{\" o}strand.
\newblock Propagation of analytic singularities for second order {D}irichlet
  problems.
\newblock {\em Comm. Partial Differential Equations}, 5(1):41--93, 1980.

\bibitem{SjAs}
J.~Sj{\" o}strand.
\newblock Singularit\'{e}s analytiques microlocales.
\newblock In {\em Ast\'{e}risque, 95}, volume~95 of {\em Ast\'{e}risque}, pages
  1--166. Soc. Math. France, Paris, 1982.

\bibitem{Tian}
G.~Tian.
\newblock On a set of polarized {K}\"{a}hler metrics on algebraic manifolds.
\newblock {\em J. Differential Geom.}, 32(1):99--130, 1990.

\bibitem{Ze}
S.~Zelditch.
\newblock Szeg{\H o} kernels and a theorem of {T}ian.
\newblock {\em Internat. Math. Res. Notices}, (6):317--331, 1998.

\end{thebibliography}

\vspace{1cm}

\noindent
\begin{tabular}{l}
Laurent Charles, \;   Sorbonne Universit\'e, CNRS, \\
 Institut de Math\'{e}matiques  
de Jussieu-Paris Rive Gauche \\
 F-75005 Paris, France. 
\end{tabular}

\end{document}